\newtheorem{theorem}{Theorem}[section]
\newtheorem{lemma}[theorem]{Lemma}
\theoremstyle{definition}
\theoremstyle{remark}
\newtheorem{remark}[theorem]{Remark}
\numberwithin{equation}{section}
\subjclass{}
\keywords{sub-supersolutions, Neumann boundary conditions.}
\begin{document}
\title[singular Lane-Emden Neumann systems]{Existence and uniqueness results to a quasilinear singular Lane-Emden Neumann system}
\subjclass[2010]{35J75, 35J62, 35J92}
\keywords{Singularity, Lane-Emden system, Neumann boundary conditions, sub-supersolutions, uniqueness}

\begin{abstract}
We establish the existence and uniqueness of solutions for quasilinear singular Lane-Emden type systems subjected to Neumann boundary conditions. The approach is chiefly based on sub-supersolutions method.
\end{abstract}

\author{Nouredine Medjoudj}
\address{Nouredine Medjoudj\\
	Applied Mathematics Laboratory, Faculty of Exact Sciences,\\
	A. Mira Bejaia University, Targa Ouzemour, 06000 Bejaia, Algeria}
\email{nouredine.medjoudj@univ-bejaia.dz}

\author{Abdelkrim Moussaoui}
\address{Abdelkrim Moussaoui\\
Applied Mathematics Laboratory (LMA), Faculty of Exact Sciences\\
and Department of Physico-Chemical Biology (BPC), Faculty of Natural and Life Sciences,\\
A. Mira Bejaia University, Algeria}
\email{abdelkrim.moussaoui@univ-bejaia.dz}

\maketitle

\section{Introduction}

\label{S1}

Let $\Omega $ be a bounded domain in $\mathbb{R}^{N}$ ($N\geq 2)$ having a
smooth boundary $\partial \Omega .$ Given $1<p_{i}<N$ for $i=1,2,$ we
consider the following quasilinear Lane-Emden type system 
\begin{equation*}
	\left( \mathrm{P}\right) \qquad \left\{ 
	\begin{array}{ll}
		-\Delta _{p_{1}}u+|u|^{p_{1}-2}u=u^{\alpha _{1}}+v^{\beta _{1}} & \text{in}%
		\;\Omega , \\ 
		-\Delta _{p_{2}}v+|v|^{p_{2}-2}v=u^{\alpha _{2}}+v^{\beta _{2}} & \text{in}%
		\;\Omega , \\ 
		u,v>0 & \text{in }%
		\;\Omega ,\\
		\frac{\partial u}{\partial \eta }=\frac{\partial v}{\partial \eta }=0 & 
		\text{on}\;\partial \Omega ,%
	\end{array}%
	\right.
\end{equation*}%
where $\eta $ is the unit outer normal to $\partial \Omega $, while $\Delta
_{p_{i}}$ denotes the $p_{i}$-Laplace operator, namely $\Delta _{p_{i}}:=%
\mathrm{div}(|\nabla w|^{p_{i}-2}\nabla w)$, $\forall \,w\in
W^{1,p_{i}}(\Omega ).$ We consider system $\left( \mathrm{P}\right) $ in
a singular case assuming that the exponents verify the condition%
\begin{equation}
	-1<\alpha _{1},\beta _{2}<0,\text{ \ }-1<\beta _{1}<p_{1}-1\text{ \ and \ }%
	-1<\alpha _{2}<p_{2}-1.  \label{c1}
\end{equation}

A solution $(u,v)\in W^{1,p_{1}}(\Omega )\times W^{1,p_{2}}(\Omega )$ of $%
\left( \mathrm{P}\right) $ is understood in the weak sense, that is,%
\begin{equation}
	\left\{ 
	\begin{array}{l}
		\int_{\Omega }|\nabla u|^{p_{1}-2}\nabla u\nabla \varphi \,\mathrm{d}%
		x+\int_{\Omega }|u|^{p_{1}-2}u\varphi \,\mathrm{d}x=\int_{\Omega }(u^{\alpha
			_{1}}+v^{\beta _{1}})\varphi \,\mathrm{d}x \\ 
		\int_{\Omega }|\nabla v|^{p_{2}-2}\nabla v\nabla \psi \,\mathrm{d}%
		x+\int_{\Omega }|v|^{p_{2}-2}v\,\psi \,\mathrm{d}x=\int_{\Omega }(u^{\alpha
			_{2}}+v^{\beta _{2}})\,\psi \,\mathrm{d}x%
	\end{array}%
	\right.  \label{DF01}
\end{equation}%
for all $(\varphi ,\psi )\in W^{1,p_{1}}(\Omega )\times W^{1,p_{2}}(\Omega )$%
.

System $\left( \mathrm{P}\right) $ is a natural extension and generalization
of the celebrated Lane-Emden equation%
\begin{equation}
	\Delta w+w^{\gamma }=0\text{ in }\Omega ,  \label{LE}
\end{equation}%
subjected to Neumann boundary conditions. It is introduced by Homer Lane \cite{Lane}, who was interested in computing both the temperature and the density of mass on the surface of the sun. The Lane-Emden equation (\ref{LE}) has been the focus of a huge number of works which we shall not discuss merely
mentioning some of them \cite{ALVR, Ben, F}. It is involved in wide range of
phenomena in mathematical physics and chemistry, specifically in the areas
of conformal geometry, thermal explosion, isothermal gas spheres and
thermionic currents \cite{S}. In astrophysics, it describes the behavior of
the density of a gas sphere in hydrostatic equilibrium. The index $\gamma ,$
called the polytropic index, is related to the ratio $r$ of the specific
heats of the gas throught $\gamma =\frac{1}{r-1}.$ It is used to determine
the structure of the interior of polytropic stars, which are subject to the
influence of their own gravitational field \cite{C}. The case $\gamma <0$ in
(\ref{LE}) is highly challenging as it brings out a singularity at the
origin. It has attracted considerable interest in recent years. We mention \cite{CRT} where it is shown that problem (\ref{LE}) subject to Dirichlet boundary conditions admits a unique positive solution $u$ in $\mathcal{C}^{2}(\Omega )\cap \mathcal{C}(\overline{\Omega })$. In addition, this solution $u$ belongs to $\mathcal{C}^{2}(\Omega )\cap \mathcal{C}^{1}(\overline{\Omega })$ once $-1<\gamma <0.$ When the quasilinear $p$-Laplacian operator is involved in singular problem (\ref{LE}), \cite{GST} provides a unique solution in  $\mathcal{C}^{1,\tau }(\overline{\Omega })$, $\tau \in (0,1),$ for $-1<\gamma <0$.

Singularities are an important feature in the study of the problem $\left( 
\mathrm{P}\right) $. They occur near the origin under assumption (\ref{c1}) on nonlinearities. This fact represents a serious difficulty
to overcome especially since a very marked singularity character is
considered for $\left( \mathrm{P}\right) $, resulting from (\ref{c1}) when
all exponents are negative. Actually, singularities are
involved in a wide range of important elliptic problems which have been
studied extensively in recent years, see for instance \cite{AC, CRT, DM1,
	DM2, G, GHM, GST, H, KM, MM3, MM2, MM1, MKT, M1} and there references. The
semilinear case, that is when $p_{1}=p_{2}=2$, has been widely investigated, especially in the context of the Gierer Meinhardt system (see, e.g., \cite{M1} and the references therein). However, as far as we know, \cite{GM} is the only paper where Neumann boundary conditions is considered for singular quasilinear systems. We emphasize that system $\left( \mathrm{P}\right) $ cannot be incorporated neither in Gierer-Meinhardt system addressed in \cite{M1} nor in the convective system studied in \cite{GM} even if gradient terms involved are canceled. 

In the aforementioned papers, two complementary structures are separately discussed. In \cite{AC, DM1, DM2, GHM, KM,
	MM1}, the systems examined are cooperative while in \cite{G, MKT,  MM3, MM2}, they are competitive. For system $\left( \mathrm{P}\right) $, these both structures are closely related to the sign of exponents $\alpha _{2}$ and $\beta _{1}$. Namely, $\left( \mathrm{P}\right) $ is cooperative if $\min\{\alpha _{2},\beta _{1}\}>0$ whereas for opposite strict inequality, $\left( \mathrm{P}\right) $ is competitive. It should be noted that, in the present paper, these both complementary structures for the system $\left(\mathrm{P}\right) $ are handled simultaneously without referring to them despite their important structural disparity that makes the right hand side in $\left( \mathrm{P}\right) $ behaving in a drastically different way.

Motivated by the aforementioned facts, our main concern is the question of
existence of solutions for system $\left( \mathrm{P}\right) $. By
adequate truncation and owing to Schauder's fixed point theorem, we first
develop two new sub-supersolutions Theorems for a general singular Neumann
Systems involving $p$-Laplacian operator (cf. Theorems \ref{T2} and \ref{T4}%
, section \ref{S3}). These results address respectively the case of bounded
and unbounded nonlinearities, restricted to the rectangle formed by the
sub-supersolution pair. Thence, the nature and properties of the
sub-supersolutions are meaningfully impacted which, henceforth, can be
constructed on a wider choice of functions than specified in the literature.
Namely, this enable to consider subsolutions subject to homogeneous Dirichlet boundary conditions, what was not possible in \cite{GM} under assumption $($\textrm{H}$)$ stated therein. It is worth noting that, unlike to what has been stated in \cite{M1}, sub-supersolutions cannot have both zero traces on the boundary $\partial \Omega $. This would lead to a solution for problem $%
\left( \mathrm{P}\right) $ with zero trace condition on $\partial \Omega $
and therefore, according to \cite[Lemma 3.1]{H}, its normal derivative would
be nontrivial, which is absurd. Another important issue being addressed by
Theorems \ref{T2} and \ref{T4} concern the property sign
of normal derivative on $\partial \Omega $ of sub-supersolutions. It is
established that the condition (\ref{c1*}) is crucial to handle quasilinear
Neumann problems via sub-supersolutions method and therefore, in no case
could be ignored. Furthermore, in the aforementioned Theorems, we mention
that no sign condition is required on the right-hand side nonlinearities.
Hence, they can be used for large classes of quasilinear singular problems,
including those discussed in \cite{GM, M1}.

In the prospect of applying Theorems \ref{T2} and \ref{T4}, pairs of
sub-supersolutions for system $\left( \mathrm{P}\right) $ are constructed by
exploiting spectral properties of the $p$-Laplacian operator as well as
properties of torsion problems, both subject to Dirichlet or Neumann
boundary conditions (cf. section \ref{S2}). A suitable adjustment of constants is also required. Moreover, by making some specific and necessary adjustments, it is quite possible to construct a pair of sub-supersolution
in the spirit of \cite{GM} which verifies assumption (\ref{c1*}).
Nevertheless, although their new form is appropriate, it no longer leads to
an infinite sequence of solutions, as it is stated in \cite{GM}. This seems
to be unfeasible under assumption (\ref{c1*}).

The sub-supersolutions pairs establish a location of solutions of $\left( 
\mathrm{P}\right) $ provided by Theorems \ref{T7} and \ref{T8} (cf. section \ref{S4}). We show that their regularity property depend on the behavior of the subsolution near the boundary $\partial \Omega $. Precisely, when the subsolution has a zero trace condition on $\partial \Omega ,$ solutions $(u,v)$ of $\left( \mathrm{P}\right) $ are bounded in $L^{\infty }(\Omega
)\times L^{\infty }(\Omega )$, whereas in the complementary case, $(u,v)$ are
bounded in $\mathcal{C}^{1}(\overline{\Omega })\times \mathcal{C}^{1}(%
\overline{\Omega })$. Note that the absence of $\mathcal{C}^{1}$-bound for
solutions in the previous case is rather due to the fact that we have not been able to find in the literature an equivalent of the regularity theorem for singular Dirichlet problems \cite[Lemma 3.1]{H} to the case of Neumann boundary conditions. Regularity results for singular Neumann problems remains an open question.

A second main objective of our work is to provide a criterion ensuring the
uniqueness of a solution to problem $\left( \mathrm{P}\right) $. In this
direction, the uniqueness result is established for $\mathcal{C}^{1}$-bound solutions. The proof is based on the adaption of argument by Krasnoselskii \cite{K} where properties of sub-supersolutions are crucial. In this part of our work, we assume all exponents in (\ref{c1}) negative, subject to some additional restrictions as described in our result (cf. section \ref{S4}). It is worth noting that the previous argument does not apply to $L^{\infty}$- bound solutions simply because the obtained supersolutions are not comparable to the distance function $d(x)$ in $\Omega$. 

The rest of this article is organized as follows. Section \ref{S3} contains
the existence theorems involving sub-supersolutions. Section \ref{S4}
presents abstract existence and uniqueness results. Section \ref{S2} deals
with the existence and uniqueness of solutions for problem $\left( \mathrm{P}%
\right) $.

\section{Sub-super-solution theorems}

\label{S3}

The Sobolev spaces $W^{1,p_{i}}(\Omega )$ and $W_{0}^{1,p_{i}}(\Omega )$
will be equipped with the norm 
\begin{equation*}
	\Vert w\Vert _{1,p_{i}}:=\left( \Vert w\Vert _{p_{i}}^{p_{i}}+\Vert \nabla
	w\Vert _{p_{i}}^{p_{i}}\right) ^{\frac{1}{p_{i}}},\quad w\in
	W^{1,p_{i}}(\Omega ),
\end{equation*}%
\begin{equation*}
	\Vert w\Vert _{1,p_{i}}:=\Vert \nabla w\Vert _{p_{i}},\quad w\in
	W_{0}^{1,p_{i}}(\Omega ),
\end{equation*}%
where, as usual, 
\begin{equation*}
	\Vert w\Vert _{p_{i}}:=\left\{ 
	\begin{array}{ll}
		\left( \int_{\Omega }|w(x)|^{p_{i}}\mathrm{d}x\right) ^{\frac{1}{p_{i}}} & 
		\text{ if }p_{i}<+\infty , \\ 
		&  \\ 
		\underset{x\in \Omega }{ess\sup }\,|w(x)| & \text{ otherwise.}%
	\end{array}%
	\right. 
\end{equation*}%
We denote by $(W^{1,p_{i}}(\Omega ))^{\ast }$ the topological dual space of $%
W^{1,p_{i}}(\Omega )$. Moreover, 
\begin{equation*}
	W_{+}^{1,p_{i}}(\Omega ):=\{w\in W^{1,p_{i}}(\Omega ):w\geq 0\},\quad
	W_{b}^{1,p_{i}}(\Omega ):=W^{1,p_{i}}(\Omega )\cap L^{\infty }(\Omega ),
\end{equation*}%
and%
\begin{equation*}
	W_{0,b}^{1,p_{i}}(\Omega ):=W_{0}^{1,p_{i}}(\Omega )\cap L^{\infty }(\Omega )
\end{equation*}%
We also utilize%
\begin{eqnarray*}
	\mathcal{C}_{+}^{1,\tau}(\overline{\Omega }) &:&=\{w\in \mathcal{C}^{1,\tau}(\overline{\Omega }):w\geq 0\text{ for all }x\in \overline{\Omega }\}, \\
	int\mathcal{C}_{+}^{1,\tau}(\overline{\Omega }) &:&=\{w\in \mathcal{C}_{+}^{1,\tau}(%
	\overline{\Omega }):w>0\text{ for all }x\in \overline{\Omega }\}, \text{ for } \tau\in (0,1).
\end{eqnarray*}%
In what follows, we set $r^{\pm }:=\max \{\pm r,0\}$ and we denote by $%
\gamma _{0}$ the unique continuous linear map $\gamma
_{0}:W^{1,p_{i}}(\Omega )\rightarrow L^{p_{i}}(\partial \Omega )$ known as
the trace map such that $\gamma _{0}(u)=u_{/\partial \Omega },$ for all $%
u\in W^{1,p}(\Omega )$ and verifies the property 
\begin{equation}
	\gamma _{0}(u^{+})=\gamma _{0}(u)^{+}\text{ \ for all }u\in
	W^{1,p_{i}}(\Omega ),  \label{1}
\end{equation}%
(see, e.g., \cite{MMP}).

Hereafter, we denote by $d(x)$ the distance from a point $x\in \overline{%
	\Omega }$ to the boundary $\partial \Omega $, where $\overline{\Omega }%
=\Omega \cup \partial \Omega $ is the closure of $\Omega \subset 
%TCIMACRO{\U{211d} }%
%BeginExpansion
\mathbb{R}
%EndExpansion
^{N}$. For $1<r<N$ and $-r<s\leq 0,$ it is known that 
\begin{equation*}
	\left( \int_{\Omega }d(x)^{s}|u(x)|^{r}\mathrm{d}x\right) ^{\frac{1}{r}}\leq
	C\Vert u\Vert _{1,r}\quad \forall \,u\in W^{1,r}(\Omega ),
\end{equation*}%
with suitable $C>0$; see \cite[Theorem 19.9, case (19.29)]{OK}. Accordingly,
by H\"{o}lder's inequality, if $-1<\beta \leq 0$ then 
\begin{equation}
	\int_{\Omega }|d(x)^{\beta } u(x)|\,\mathrm{d}x\leq |\Omega |^{\frac{1}{%
			r^{\prime }}}\left( \int_{\Omega }d(x)^{\beta r}|u(x)|^{r}\mathrm{d}x\right) ^{%
		\frac{1}{r}}\leq C|\Omega |^{\frac{1}{r^{\prime }}}\Vert u\Vert
	_{1,r},\;\;u\in W^{1,r}(\Omega ).  \label{54}
\end{equation}

Finally, we say that $j:\Omega \times \mathbb{R}^{2}\rightarrow \mathbb{R}$
is a Carath\'{e}odory function provided

\begin{itemize}
	\item[$\bullet $] $x\mapsto j(x,s,t)$ is measurable for every $(s,t)\in 
	\mathbb{R}^{2}$, and
	
	\item[$\bullet $] $(s,t)\mapsto j(x,s,t)$ is continuous for almost all $x\in
	\Omega $.
\end{itemize}

\mathstrut

This section investigates the existence of solutions to system%
\begin{equation*}
	\left( \mathrm{P}_{f_{1},f_{2}}\right) \qquad \left\{ 
	\begin{array}{ll}
		-\Delta _{p_{1}}u+|u|^{p_{1}-2}u=f_{1}(x,u,v) & \text{in}\;\Omega , \\ 
		-\Delta _{p_{2}}v+|v|^{p_{2}-2}v=f_{2}(x,u,v) & \text{in}\;\Omega , \\ 
		\frac{\partial u}{\partial \eta }=\frac{\partial v}{\partial \eta }=0 & 
		\text{on}\;\partial \Omega ,%
	\end{array}%
	\right.
\end{equation*}%
where $f_{i}:\Omega \times \mathbb{R}^{2}\rightarrow \mathbb{R}$ satisfy
Carath\'{e}odory's conditions. The following assumptions will be posited.

\begin{itemize}
	\item[$(\mathrm{H}_{1})$] With appropriate $(\underline{u},\underline{v}),(%
	\overline{u},\overline{v})\in W_{b}^{1,p_{1}}(\Omega )\times
	W_{b}^{1,p_{2}}(\Omega )$ such that%
	\begin{equation}
		\max \{\frac{\partial \underline{u}}{\partial \eta },\frac{\partial 
			\underline{v}}{\partial \eta }\}\leq 0\leq \min \{\frac{\partial \overline{u}%
		}{\partial \eta },\frac{\partial \overline{v}}{\partial \eta }\}\text{ on }%
		\partial \Omega ,  \label{c1*}
	\end{equation}%
	one has $\underline{u}\leq \overline{u}$, $\underline{v}\leq \overline{v}$,
	as well as 
	\begin{equation}
		\left\{ 
		\begin{array}{l}
			\int_{\Omega }|\nabla \underline{u}|^{p_{1}-2}\nabla \underline{u}\nabla
			\varphi \,\mathrm{d}x+\int_{\Omega }|\underline{u}|^{p_{1}-2}\underline{u}%
			\varphi \,\mathrm{d}x \\ 
			-\int_{\partial \Omega }|\nabla \underline{u}|^{p_{1}-2}\frac{\partial 
				\underline{u}}{\partial \eta }\gamma _{0}(\varphi )\text{ }\mathrm{d}s\leq
			\int_{\Omega }f_{1}(\cdot ,\underline{u},v)\varphi \,\mathrm{d}x, \\ 
			\\ 
			\int_{\Omega }|\nabla \underline{v}|^{p_{2}-2}\nabla \underline{v}\,\nabla
			\psi \,\mathrm{d}x+\int_{\Omega }|\underline{v}|^{p_{2}-2}\underline{v}%
			\,\psi \,\mathrm{d}x \\ 
			-\int_{\partial \Omega }|\nabla \underline{v}|^{p_{2}-2}\frac{\partial 
				\underline{v}}{\partial \eta }\gamma _{0}(\psi )\text{ }\mathrm{d}s\leq
			\int_{\Omega }f_{2}(\cdot ,u,\underline{v})\psi \,\mathrm{d}x,%
		\end{array}%
		\right.  \label{c2}
	\end{equation}%
	\begin{equation}
		\left\{ 
		\begin{array}{l}
			\int_{\Omega }|\nabla \overline{u}|^{p_{1}-2}\nabla \overline{u}\,\nabla
			\varphi \,\mathrm{d}x+\int_{\Omega }|\overline{u}|^{p_{1}-2}\overline{u}%
			\,\varphi \,\mathrm{d}x \\ 
			-\int_{\partial \Omega }|\nabla \overline{u}|^{p_{1}-2}\frac{\partial 
				\overline{u}}{\partial \eta }\gamma _{0}(\varphi )\text{ }\mathrm{d}s\geq
			\int_{\Omega }f_{1}(\cdot ,\overline{u},v)\varphi \,\mathrm{d}x, \\ 
			\\ 
			\int_{\Omega }|\nabla \overline{v}|^{p_{2}-2}\nabla \overline{v}\,\nabla
			\psi \,\mathrm{d}x+\int_{\Omega }|\overline{v}|^{p_{2}-2}\overline{v}\,\psi
			\,\mathrm{d}x \\ 
			-\int_{\partial \Omega }|\nabla \overline{v}|^{p_{2}-2}\frac{\partial 
				\overline{v}}{\partial \eta }\gamma _{0}(\psi )\text{ }\mathrm{d}s\geq
			\int_{\Omega }f_{2}(\cdot ,u,\overline{v})\psi \,\mathrm{d}x%
		\end{array}%
		\right.  \label{c3}
	\end{equation}%
	for all $(\varphi ,\psi )\in W_{+}^{1,p_{1}}(\Omega )\times
	W_{+}^{1,p_{2}}(\Omega )$, $(u,v)\in W^{1,p_{1}}(\Omega )\times
	W^{1,p_{2}}(\Omega )$ such that $(u,v)\in \lbrack \underline{u},\overline{u}%
	]\times \lbrack \underline{v},\overline{v}]$.
	
	\item[$(\mathrm{H}_{2})$] For appropriate $C>0$ and $\gamma \in (-1,0)$ one
	has 
	\begin{equation*}
		\max |f_{i}(x,s,t)|\leq Cd(x)^{\gamma }\quad \text{in}\quad \Omega \times
		\lbrack \underline{u},\overline{u}]\times \lbrack \underline{v},\overline{v}%
		].
	\end{equation*}
\end{itemize}

Note that under $(\mathrm{H}_{2})$ and by virtue of the Hardy-Sobolev inequality type in (\ref{54}), the integrals involving $f_{1}$ and $f_{2}$ in (\ref{c2}) and (\ref{c3}) take sense .

\begin{theorem}
	\label{T2} Suppose $(\mathrm{H}_{1})$--$(\mathrm{H}_{2})$ hold true. Then,
	problem $(\mathrm{P}_{f_{1},f_{2}})$ possesses a solution $(u,v)\in
	W_{b}^{1,p_{1}}(\Omega )\times W_{b}^{1,p_{2}}(\Omega )$ such that 
	\begin{equation}
		\underline{u}\leq u\leq \overline{u}\quad \text{and}\quad \underline{v}\leq
		v\leq \overline{v}.  \label{19}
	\end{equation}%
	Moreover, $\frac{\partial u}{\partial \eta }=\frac{\partial v}{\partial \eta 
	}=0$ on $\partial \Omega $.
\end{theorem}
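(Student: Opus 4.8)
The plan is to combine a truncation procedure with Schauder's fixed point theorem and to close the argument with a weak comparison estimate that exploits the sign condition (\ref{c1*}). First I would introduce the truncation operators $T_{1}(x,s):=\min\{\overline{u}(x),\max\{s,\underline{u}(x)\}\}$ and $T_{2}(x,t):=\min\{\overline{v}(x),\max\{t,\underline{v}(x)\}\}$, and set $\tilde{f}_{i}(x,s,t):=f_{i}(x,T_{1}(x,s),T_{2}(x,t))$. Since $(T_{1}(x,s),T_{2}(x,t))\in[\underline{u},\overline{u}]\times[\underline{v},\overline{v}]$ always, $(\mathrm{H}_{2})$ gives the uniform bound $|\tilde{f}_{i}(x,s,t)|\leq Cd(x)^{\gamma}$, so by the Hardy--Sobolev inequality (\ref{54}) each $\tilde{f}_{i}(\cdot,w_{1},w_{2})$ defines a bounded functional on $W^{1,p_{i}}(\Omega)$.

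Next, for fixed $(w_{1},w_{2})\in L^{p_{1}}(\Omega)\times L^{p_{2}}(\Omega)$ I would solve the decoupled Neumann problems $A_{i}u=\tilde{f}_{i}(\cdot,w_{1},w_{2})$, where $A_{i}u:=-\Delta_{p_{i}}u+|u|^{p_{i}-2}u$ acting from $W^{1,p_{i}}(\Omega)$ into its dual is strictly monotone, continuous and coercive, hence a homeomorphism by the Minty--Browder theorem. This yields a unique pair $(u,v)=:\mathcal{S}(w_{1},w_{2})$. The uniform bound on $\tilde{f}_{i}$ together with coercivity produces an a priori estimate $\Vert u\Vert_{1,p_{i}}\leq M$ independent of $(w_{1},w_{2})$, so $\mathcal{S}$ sends $L^{p_{1}}\times L^{p_{2}}$ into a fixed ball of $W^{1,p_{1}}\times W^{1,p_{2}}$; composing with the compact Rellich embedding $W^{1,p_{i}}\hookrightarrow L^{p_{i}}$ makes $\mathcal{S}$ compact, while continuity follows from a Carath\'{e}odory plus dominated-convergence argument (the domination being $Cd(x)^{\gamma}$, controlled by (\ref{54})). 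Schauder's theorem then furnishes a fixed point $(u,v)=\mathcal{S}(u,v)$, i.e.\ a weak solution of the truncated system.

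The decisive step is to show that this fixed point respects the rectangle (\ref{19}), for then the truncation is inactive and $(u,v)$ solves $(\mathrm{P}_{f_{1},f_{2}})$; moreover boundedness of the sub- and supersolutions forces $(u,v)\in W_{b}^{1,p_{1}}(\Omega)\times W_{b}^{1,p_{2}}(\Omega)$. To obtain $u\geq\underline{u}$ I would subtract the first inequality in (\ref{c2}) from the equation for $u$ and test with $\varphi=(\underline{u}-u)^{+}\in W_{+}^{1,p_{1}}(\Omega)$. On $\{\underline{u}>u\}$ one has $T_{1}(x,u)=\underline{u}$, so $\tilde{f}_{1}(x,u,v)=f_{1}(x,\underline{u},T_{2}(x,v))$ with $T_{2}(x,v)\in[\underline{v},\overline{v}]$, matching the admissible right-hand side of (\ref{c2}); the boundary term $-\int_{\partial\Omega}|\nabla\underline{u}|^{p_{1}-2}\frac{\partial\underline{u}}{\partial\eta}\gamma_{0}(\varphi)\,\mathrm{d}s$ is nonnegative, since (\ref{c1*}) gives $\frac{\partial\underline{u}}{\partial\eta}\leq0$ and $\gamma_{0}(\varphi)\geq0$ by (\ref{1}). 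Strict monotonicity of $\xi\mapsto|\xi|^{p_{1}-2}\xi$ in both the gradient and the lower-order term then forces the remaining integral over $\{\underline{u}>u\}$ to be simultaneously $\leq0$ and $\geq0$, whence $(\underline{u}-u)^{+}=0$. The bounds $u\leq\overline{u}$ and $\underline{v}\leq v\leq\overline{v}$ follow identically from (\ref{c3}) and the $v$-components of (\ref{c2})--(\ref{c3}). Finally, since the fixed point satisfies the weak formulation tested against all of $W^{1,p_{i}}(\Omega)$, the natural condition $\frac{\partial u}{\partial\eta}=\frac{\partial v}{\partial\eta}=0$ on $\partial\Omega$ holds.

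I expect the comparison step to be the principal obstacle. The boundary integrals in (\ref{c2})--(\ref{c3}) do not vanish because the sub- and supersolutions need not have zero normal derivative, and the sign condition (\ref{c1*}) is precisely what keeps these boundary contributions from destroying the monotonicity estimate; this is the structural point that distinguishes the Neumann setting from the Dirichlet one. A secondary technical difficulty is verifying continuity and compactness of $\mathcal{S}$ in the presence of the boundary singularity $d(x)^{\gamma}$, which is handled uniformly through (\ref{54}).
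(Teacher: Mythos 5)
Your proposal follows essentially the same route as the paper's proof: the same pointwise truncation into $[\underline{u},\overline{u}]\times[\underline{v},\overline{v}]$, the Minty--Browder theorem for the decoupled auxiliary Neumann system, the Hardy--Sobolev inequality (\ref{54}) to make sense of the singular right-hand sides, Schauder's fixed point theorem for the solution operator, and the identical comparison argument in which the sign condition (\ref{c1*}) keeps the boundary integrals from spoiling the monotonicity estimate. The only divergences are technical and harmless: you get compactness from the uniform a priori $W^{1,p_i}$-bound plus the Rellich embedding and continuity from a dominated-convergence argument, whereas the paper runs both through Fatou's lemma and the $S_{+}$-property of $-\Delta_{p_i}$, and it invokes the Casas--Fern\'andez Green's formula to interpret the vanishing normal derivatives rigorously where you appeal to the standard ``natural boundary condition'' reasoning.
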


\begin{proof}
	Given $(z_{1},z_{2})\in L^{p_{1}}(\Omega )\times L^{p_{2}}(\Omega )$, we
	define 
	\begin{equation}
		\mathrm{T}_{1}(z_{1}):=\left\{ 
		\begin{array}{ll}
			\underline{u} & \text{when }z_{1}\leq \underline{u}, \\ 
			z_{1} & \text{if }\underline{u}\leq z_{1}\leq \overline{u}, \\ 
			\overline{u} & \text{otherwise,}%
		\end{array}%
		\right. \quad \mathrm{T}_{2}(z_{2}):=\left\{ 
		\begin{array}{ll}
			\underline{v} & \text{when }z_{2}\leq \underline{v}, \\ 
			z_{2} & \text{if }\underline{v}\leq z_{2}\leq \overline{v}, \\ 
			\overline{v} & \text{otherwise.}%
		\end{array}%
		\right.  \label{0}
	\end{equation}%
	Assumption $(\mathrm{H}_{2})$ together with Hardy-Sobolev type inequality (\ref{54}) we
	infer that 
	\[
	f_{i}(x,\mathrm{T}_{1}(z_{1}),\mathrm{T}_{2}(z_{2}))\in
	\left(W^{1,p_{i}}(\Omega ) \right)^{*}, \text{ for } i=1,2.
	\]%
	Then, from Minty-Browder Theorem (see, e.g., \cite[Theorem V.15]{B}), it
	follows that the auxiliary problem 
	\begin{equation}
		\left\{ 
		\begin{array}{ll}
			-\Delta _{p_{1}}u+|u|^{p_{1}-2}u=f_{1}(x,\mathrm{T}_{1}(z_{1}),\mathrm{T}%
			_{2}(z_{2})) & \text{in }\Omega , \\ 
			-\Delta _{p_{2}}v+|v|^{p_{2}-2}v=f_{2}(x,\mathrm{T}_{1}(z_{1}),\mathrm{T}%
			_{2}(z_{2})) & \text{in }\Omega , \\ 
			\frac{\partial u}{\partial \eta }=\frac{\partial v}{\partial \eta }=0 & 
			\text{on }\partial \Omega ,%
		\end{array}%
		\right.  \label{301}
	\end{equation}%
	admits a unique solution $(u,v)\in W^{1,p_{1}}(\Omega )\times
	W^{1,p_{2}}(\Omega ).$
	
	Let us introduce the operator 
	\[
	\begin{array}{cccc}
		\mathcal{T}: & L^{p_{1}}(\Omega )\times L^{p_{2}}(\Omega ) & \rightarrow & 
		W^{1,p_{1}}(\Omega )\times W^{1,p_{2}}(\Omega ) \\ 
		& (z_{1},z_{2}) & \mapsto & (u,v).%
	\end{array}%
	\]%
	We note from (\ref{301}) that any fixed point of $\mathcal{T}$ within $[%
	\underline{u},\overline{u}]\times \lbrack \underline{v},\overline{v}]$
	coincides with the weak solution of $(\mathrm{P}_{f_{1},f_{2}})$.
	
	Let us show that $\mathcal{T}$ is continuous. Let $(z_{1,n},z_{2,n})%
	\rightarrow (z_{1},z_{2})$ in $L^{p_{1}}(\Omega )\times L^{p_{2}}(\Omega )$
	for all $n$. Denote $\left( u_{n},v_{n}\right) =\mathcal{T}(z_{1,n},z_{2,n})$%
	, which reads as 
	\begin{equation}
		\begin{array}{c}
			\int_{\Omega }|\nabla u_{n}|^{p_{1}-2}\nabla u_{n}\nabla \varphi _{1}\,%
			\mathrm{d}x+\int_{\Omega }|u_{n}|^{p_{1}-2}u_{n}\varphi _{1}\,\mathrm{d}x \\ 
			=\int_{\Omega }f_{1}(x,\mathrm{T}_{1}(z_{1,n}),\mathrm{T}_{2}(z_{2,n}))%
			\varphi _{1}\,\mathrm{d}x%
		\end{array}
		\label{310}
	\end{equation}%
	and 
	\begin{equation}
		\begin{array}{c}
			\int_{\Omega }|\nabla v_{n}|^{p_{2}-2}\nabla v_{n}\nabla \varphi _{2}\,%
			\mathrm{d}x+\int_{\Omega }|v_{n}|^{p_{2}-2}v_{n}\varphi _{2}\,\mathrm{d}x \\ 
			=\int_{\Omega }f_{2}(x,\mathrm{T}_{1}(z_{1,n}),\mathrm{T}_{2}(z_{2,n}))%
			\varphi _{2}\,\mathrm{d}x%
		\end{array}
		\label{311}
	\end{equation}%
	for all $\varphi _{i}\in W^{1,p_{i}}(\Omega ),\ i=1,2$. Inserting $(\varphi
	_{1},\varphi _{2})=(u_{n},v_{n})$ in (\ref{310}) and (\ref{311}), using $(%
	\mathrm{H}_{2})$ and (\ref{54}), we get 
	\begin{equation}
		\left\Vert u_{n}\right\Vert _{1,p_{1}}^{p_{1}}=\int_{\Omega }f_{1}(x,\mathrm{%
			T}_{1}(z_{1,n}),\mathrm{T}_{2}(z_{2,n}))u_{n}\,\mathrm{d}x\leq C\int_{\Omega
		}d(x)^{\gamma }u_{n}\,\mathrm{d}x  \label{306}
	\end{equation}%
	and 
	\begin{equation}
		\left\Vert v_{n}\right\Vert_{1,p_{2}}^{p_{2}}=\int_{\Omega }f_{2}(x,\mathrm{T%
		}_{1}(z_{1,n}),\mathrm{T}_{2}(z_{2,n}))v_{n}\,\mathrm{d}x\leq C\int_{\Omega
		}d(x)^{\gamma }v_{n}\,\mathrm{d}x.  \label{307}
	\end{equation}%
	Since $-1<\gamma <0$, by virtue of the Hardy-Sobolev inequality type in (\ref{54}), the last integrals in (\ref{306}) and (\ref{307}) are finite and it holds
	\begin{equation}
		\left\Vert u_{n}\right\Vert _{1,p_{1}}^{p_{1}} \leq C_{1} \left\Vert
		u_{n}\right\Vert _{1,p_{1}}  \label{306-}
	\end{equation}%
	and 
	\begin{equation}
		\left\Vert v_{n}\right\Vert_{1,p_{2}}^{p_{2}} \leq C_{1} \left\Vert
		v_{n}\right\Vert_{1,p_{2}},  \label{307-}
	\end{equation}
	for a certain $C_1>0$ independent of $n$. Thus, $\{u_{n}\}$ and $\{v_{n}\}$ are bounded in $%
	W^{1,p_{1}}(\Omega )$ and $W^{1,p_{2}}(\Omega )$, respectively. So, passing
	to relabeled subsequences, we get 
	\begin{equation}
		\begin{array}{c}
			(u_{n},v_{n})\rightharpoonup \left( u,v\right) \text{ in }W^{1,p_{1}}(\Omega
			)\times W^{1,p_{2}}(\Omega ),%
		\end{array}
		\label{312}
	\end{equation}%
	for certain $\left( u,v\right) $ in $W^{1,p_{1}}(\Omega )\times
	W^{1,p_{2}}(\Omega )$. Setting $\varphi _{1}=u_{n}-u$ in (\ref{310}) we find
	that 
	\[
	\begin{array}{l}
		\int_{\Omega }|\nabla u_{n}|^{p_{1}-2}\nabla u_{n}\nabla (u_{n}-u)\,\mathrm{d%
		}x+\int_{\Omega }|u_{n}|^{p_{1}-2}u_{n}(u_{n}-u)\,\mathrm{d}x \\ 
		=\int_{\Omega }f_{1}(x,\mathrm{T}_{1}(z_{1,n}),\mathrm{T}%
		_{2}(z_{2,n}))(u_{n}-u)\ \mathrm{d}x%
	\end{array}%
	\]%
	Note that $(\mathrm{H}_{2})$ as well as (\ref{54}) ensure that 
	\begin{equation}
		f_{1}(x,\mathrm{T}_{1}(z_{1,n}),\mathrm{T}_{2}(z_{2,n}))(u_{n}-u)\in
		L^{1}(\Omega ).  \label{300}
	\end{equation}%
	Thus, Fatou's Lemma implies%
	\[
	\begin{array}{l}
		\underset{n\rightarrow \infty }{\lim }\sup \int_{\Omega }f_{1}(x,\mathrm{T}%
		_{1}(z_{1,n}),\mathrm{T}_{2}(z_{2,n}))(u_{n}-u)\ dx \\ 
		\leq \int_{\Omega }\underset{n\rightarrow \infty }{\lim }\sup \left( f_{1}(x,%
		\mathrm{T}_{1}(z_{1,n}),\mathrm{T}_{2}(z_{2,n}))(u_{n}-u)\right) \
		dx\rightarrow 0,%
	\end{array}%
	\]%
	showing that%
	\[
	\underset{n\rightarrow \infty }{\lim }\sup \left\langle -\Delta
	_{p_{1}}u_{n}+|u_{n}|^{p_{1}-2}u_{n},u_{n}-u\right\rangle \leq 0.
	\]%
	Likewise, we prove that%
	\[
	\underset{n\rightarrow \infty }{\lim }\sup \left\langle -\Delta
	_{p_{2}}v_{n}+|v_{n}|^{p_{2}-2}v_{n},v_{n}-v\right\rangle \leq 0.
	\]
	
	Then, the $S_{+}$-property of $-\Delta _{p_{i}}$ on $W^{1,p_{i}}(\Omega )$
	(see, e.g., \cite[Proposition 2.72]{MMP}) along with (\ref{312}) implies 
	\begin{equation}
		\begin{array}{c}
			(u_{n},v_{n})\rightarrow (u,v)\text{ in }W^{1,p_{1}}(\Omega )\times
			W^{1,p_{2}}(\Omega ).%
		\end{array}
		\label{9}
	\end{equation}%
	Through (\ref{310}), (\ref{311}) and the invariance of $L^{p_{1}}(\Omega
	)\times L^{p_{2}}(\Omega )$ by $\mathcal{T}$, we infer that $\left(
	u,v\right) =\mathcal{T}(z_{1},z_{2})$, as desired.
	
	Let us verify that $\mathcal{T}(L^{p_{1}}(\Omega )\times L^{p_{2}}(\Omega ))$
	is a relatively compact subset. If $(u_{n},v_{n}):=\mathcal{T}%
	(y_{1,n},y_{2,n}),$ $n\in 
	%TCIMACRO{\U{2115} }%
	%BeginExpansion
	\mathbb{N}
	%EndExpansion
	$, (\ref{310}) and (\ref{311}) can be written. Hence, the previous argument
	yields a pair $(u,v)\in W^{1,p_{1}}(\Omega )\times W^{1,p_{2}}(\Omega )$
	fulfilling (\ref{9}), possibly along a subsequence.
	
	We are thus in a position to apply Schauder's fixed point theorem to the map 
	$\mathcal{T}$, which establishes the existence of $(u,v)\in
	W^{1,p_{1}}(\Omega )\times W^{1,p_{2}}(\Omega )$ satisfying $(u,v)=\mathcal{T%
	}(u,v).$ Due to \cite[Theorem 3]{CF}, one has 
	\[
	\frac{\partial u}{\partial \eta }=\frac{\partial v}{\partial \eta }=0\;\;%
	\text{on }\partial \Omega .
	\]%
	Hence, $(u,v)$ is a solution of $(\mathrm{P}_{f_{1},f_{2}})$. Let us show
	that (\ref{19}) is fulfilled. Put $\zeta =(\underline{u}-u)^{+}$ and suppose 
	$\zeta \neq 0$. Then, from $(\mathrm{H}_{1})$, (\ref{0}) and (\ref{301}) ,
	we get 
	\[
	\begin{array}{l}
		\int_{\{u<\underline{u}\}}|\nabla u|^{p_{1}-2}\nabla u\nabla \zeta \ \mathrm{%
			d}x+\int_{\{u<\underline{u}\}}|u|^{p_{1}-2}u\zeta \ \mathrm{d}x \\ 
		=\int_{\Omega }|\nabla u|^{p_{1}-2}\nabla u\nabla \zeta \ \mathrm{d}%
		x+\int_{\Omega }|u|^{p_{1}-2}u\zeta \ \mathrm{d}x=\int_{\Omega }f_{1}(x,%
		\mathrm{T}_{1}(u),\mathrm{T}_{2}(v))\zeta \ \mathrm{d}x \\ 
		=\int_{\{u<\underline{u}\}}f_{1}(x,\mathrm{T}_{1}(u),\mathrm{T}_{2}(v))\zeta
		\ \mathrm{d}x=\int_{\{u<\underline{u}\}}f_{1}(x,\underline{u},\mathrm{T}%
		_{2}(v))\zeta \ \mathrm{d}x \\ 
		\geq \int_{\{u<\underline{u}\}}|\nabla \underline{u}|^{p_{1}-2}\nabla 
		\underline{u}\nabla \zeta \ \mathrm{d}x+\int_{\{u<\underline{u}\}}|%
		\underline{u}|^{p_{1}-2}\underline{u}\zeta \ \mathrm{d}x-\int_{\partial
			\Omega }|\nabla \underline{u}|^{p_{1}-2}\frac{\partial \underline{u}}{%
			\partial \eta }\gamma _{0}(\zeta )\text{ }\mathrm{d}s.%
	\end{array}%
	\]%
	which by $(\mathrm{H}_{1})$ and (\ref{1}), implies that 
	\[
	\begin{array}{c}
		\int_{\{u<\underline{u}\}}(|\nabla \underline{u}|^{p_{1}-2}\nabla \underline{%
			u}-|\nabla u|^{p_{1}-2}\nabla u)\nabla \zeta \ \mathrm{d}x+\int_{\{u<%
			\underline{u}\}}(|\underline{u}|^{p_{1}-2}\underline{u}-|u|^{p_{1}-2}u)\zeta
		\ \mathrm{d}x \\ 
		\leq \int_{\partial \Omega }|\nabla \underline{u}|^{p_{1}-2}\frac{\partial 
			\underline{u}}{\partial \eta }\gamma _{0}(\zeta )\text{ }\mathrm{d}s\leq 0,%
	\end{array}%
	\]%
	a contradiction. Hence $u\geq \underline{u}$ in $\Omega $. Arguing
	similarly, set $\hat{\zeta}=(u-\overline{u})^{+}$ and assume that $\hat{\zeta%
	}\neq 0.$ Then%
	\[
	\begin{array}{l}
		\int_{\{u>\overline{u}\}}|\nabla u|^{p_{1}-2}\nabla u\nabla \hat{\zeta}\ 
		\mathrm{d}x+\int_{\{u>\overline{u}\}}|u|^{p_{1}-2}u\hat{\zeta}\ \mathrm{d}x
		\\ 
		=\int_{\Omega }|\nabla u|^{p_{1}-2}\nabla u\nabla \hat{\zeta}\ \mathrm{d}%
		x+\int_{\Omega }|u|^{p_{1}-2}u\hat{\zeta}\ \mathrm{d}x=\int_{\Omega }f_{1}(x,%
		\mathrm{T}_{1}(u),\mathrm{T}_{2}(v))\hat{\zeta}\ \mathrm{d}x \\ 
		=\int_{\{u>\overline{u}\}}f_{1}(x,\mathrm{T}_{1}(u),\mathrm{T}_{2}(v))\hat{%
			\zeta}\ \mathrm{d}x=\int_{\{u>\overline{u}\}}f_{1}(x,\overline{u},\mathrm{T}%
		_{2}(v))\hat{\zeta}\ \mathrm{d}x \\ 
		\leq \int_{\{u>\overline{u}\}}|\nabla \overline{u}|^{p_{1}-2}\nabla 
		\overline{u}\nabla \hat{\zeta}\ \mathrm{d}x+\int_{\{u>\overline{u}\}}|%
		\overline{u}|^{p_{1}-2}\overline{u}\hat{\zeta}\ \mathrm{d}x-\int_{\partial
			\Omega }|\nabla \overline{u}|^{p_{1}-2}\frac{\partial \overline{u}}{\partial
			\eta }\gamma _{0}(\hat{\zeta})\text{ }\mathrm{d}s%
	\end{array}%
	\]%
	which leads to%
	\[
	\begin{array}{c}
		\int_{\{u>\overline{u}\}}(|\nabla u|^{p_{1}-2}\nabla u-|\nabla \overline{u}%
		|^{p_{1}-2}\nabla \overline{u})\nabla \hat{\zeta}\ \mathrm{d}x+\int_{\{u>%
			\overline{u}\}}(|u|^{p_{1}-2}u-|\overline{u}|^{p_{1}-2}\overline{u})\hat{%
			\zeta}\ \mathrm{d}x \\ 
		\leq -\int_{\partial \Omega }|\nabla \overline{u}|^{p_{1}-2}\frac{\partial 
			\overline{u}}{\partial \eta }\gamma _{0}(\hat{\zeta})\text{ }\mathrm{d}s\leq
		0,%
	\end{array}%
	\]%
	a contradiction. Thus, we have $u\leq \overline{u}$ in $\Omega $. A quite
	similar argument provides that $\underline{v}\leq v\leq \overline{v}$ in $\Omega $. Therefore, since $(\underline{u},\underline{v}),(\overline{u},
	\overline{v})\in L^{\infty }(\Omega )\times L^{\infty }(\Omega)$, we conclude that $(u,v)\in
	W_{b}^{1,p_{1}}(\Omega )\times W_{b}^{1,p_{2}}(\Omega )$. This completes the
	proof.{\footnotesize \ }
\end{proof}

Instead of $(\mathrm{H}_{2}),$ if we assume that $f_{1}$ and $f_{2}$ are
bounded in $\Omega \times \lbrack \underline{u},\overline{u}]\times \lbrack 
\underline{v},\overline{v}]$, the conclusion of Theorem \ref{T2} remains
true but assigning more regularity to the solution.

\begin{itemize}
	\item[$(\mathrm{H}_{3})$] There exists a constant $M>0$ such that%
	\begin{equation*}
		\left\vert f_{i}(x,u,v)\right\vert \leq M\text{ in }\Omega \times \lbrack 
		\underline{u},\overline{u}]\times \lbrack \underline{v},\overline{v}]\text{,
			for }i=1,2.
	\end{equation*}
\end{itemize}

\begin{theorem}
	\label{T4} Suppose $(\mathrm{H}_{1})$ and $(\mathrm{H}_{3})$ hold true.
	Then, problem $(\mathrm{P}_{f_{1},f_{2}})$ possesses a solution $(u,v)\in 
	\mathcal{C}^{1,\tau }(\overline{\Omega })\times \mathcal{C}^{1,\tau }(%
	\overline{\Omega })$ with suitable $\tau \in (0,1)$ such that 
	\begin{equation}
		\underline{u}\leq u\leq \overline{u}\quad \text{and}\quad \underline{v}\leq
		v\leq \overline{v}.  \label{19*}
	\end{equation}%
	Moreover, $\frac{\partial u}{\partial \eta }=\frac{\partial v}{\partial \eta 
	}=0$ on $\partial \Omega $.
\end{theorem}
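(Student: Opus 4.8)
The plan is to reproduce the scheme of the proof of Theorem \ref{T2} almost line by line, with two simplifications and one genuine addition. The simplifications stem from the fact that $(\mathrm{H}_3)$ is stronger than $(\mathrm{H}_2)$: since $|f_i|\le M$ on the rectangle $[\underline{u},\overline{u}]\times[\underline{v},\overline{v}]$, the truncated nonlinearities $f_i(\cdot,\mathrm{T}_1(z_1),\mathrm{T}_2(z_2))$ now lie in $L^{\infty}(\Omega)\hookrightarrow(W^{1,p_i}(\Omega))^{\ast}$ directly, so the Hardy-Sobolev inequality (\ref{54}) is no longer needed to give meaning to the auxiliary problem. Keeping the same truncations $\mathrm{T}_1,\mathrm{T}_2$ from (\ref{0}), the Minty-Browder theorem again furnishes a unique solution $(u,v)$ of the auxiliary Neumann problem (\ref{301}), and hence the same solution operator $\mathcal{T}\colon L^{p_1}(\Omega)\times L^{p_2}(\Omega)\to W^{1,p_1}(\Omega)\times W^{1,p_2}(\Omega)$.

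First I would establish the continuity and relative compactness of $\mathcal{T}$ exactly as before, the a priori bound now being even cleaner: testing (\ref{301}) with $(u_n,v_n)$ and using $(\mathrm{H}_3)$ gives $\|u_n\|_{1,p_1}^{p_1}\le M|\Omega|^{1/p_1'}\|u_n\|_{1,p_1}$ and likewise for $v_n$, so $\{u_n\},\{v_n\}$ are bounded; weak convergence together with the $S_+$-property of $-\Delta_{p_i}$ \cite[Proposition 2.72]{MMP} upgrades this to strong convergence and shows $(u,v)=\mathcal{T}(z_1,z_2)$. Schauder's fixed point theorem then produces a fixed point $(u,v)=\mathcal{T}(u,v)$, which is a weak solution of $(\mathrm{P}_{f_1,f_2})$; the homogeneous Neumann condition follows from \cite[Theorem 3]{CF}, and the location (\ref{19*}) is obtained by the identical test-function argument with $\zeta=(\underline{u}-u)^+$ and $\hat{\zeta}=(u-\overline{u})^+$, invoking $(\mathrm{H}_1)$, (\ref{0}) and the trace identity (\ref{1}).

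The one genuine new ingredient, and the step I expect to be the crux, is the regularity upgrade to $\mathcal{C}^{1,\tau}(\overline{\Omega})$. Once the location bounds (\ref{19*}) are known, the right-hand sides $f_1(\cdot,u,v)$ and $f_2(\cdot,u,v)$ are bounded in $L^{\infty}(\Omega)$ by $(\mathrm{H}_3)$, so each component solves a quasilinear equation $-\Delta_{p_i}w+|w|^{p_i-2}w=g_i$ with $g_i\in L^{\infty}(\Omega)$ under a homogeneous Neumann condition. The nonlinear global regularity theory for the $p$-Laplacian (Lieberman's boundary $\mathcal{C}^{1,\tau}$ estimates) then yields $w\in\mathcal{C}^{1,\tau}(\overline{\Omega})$ for some $\tau\in(0,1)$. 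The delicate point is precisely that this $\mathcal{C}^{1,\tau}$ bound up to the boundary must come from a \emph{Neumann} regularity result rather than from the Dirichlet theorem \cite[Lemma 3.1]{H}, and this is exactly where the boundedness hypothesis $(\mathrm{H}_3)$ is indispensable: under the merely singular bound $(\mathrm{H}_2)$ of Theorem \ref{T2} no such global $\mathcal{C}^1$-estimate is available, which is why Theorem \ref{T2} only asserts $W_b^{1,p_i}$-regularity. With this regularity in hand, all the assertions of the theorem follow.
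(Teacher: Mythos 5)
Your proposal is correct, but it takes a genuinely different route from the paper's proof. The paper does \emph{not} first produce a $W^{1,p}$ solution and then bootstrap: it changes the functional setting of the fixed point argument itself, defining $\mathcal{T}$ on $\mathcal{C}(\overline{\Omega })\times \mathcal{C}(\overline{\Omega })$ rather than on $L^{p_{1}}(\Omega )\times L^{p_{2}}(\Omega )$, and uses the Lieberman estimates \cite{L} \emph{uniformly on the auxiliary problems} (\ref{301*}) to get a bound $\Vert u\Vert _{\mathcal{C}^{1,\tau }},\Vert v\Vert _{\mathcal{C}^{1,\tau }}\leq L_{0}$ independent of the data; the compact embedding $\mathcal{C}^{1,\tau }(\overline{\Omega })\subset \mathcal{C}^{1}(\overline{\Omega })$ then makes $\mathcal{T}$ compact into $\mathcal{C}^{1}(\overline{\Omega })\times \mathcal{C}^{1}(\overline{\Omega })$, so the Schauder fixed point is $\mathcal{C}^{1,\tau }$-regular from the outset. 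You instead keep the $L^{p_{1}}\times L^{p_{2}}$ scheme of Theorem \ref{T2} verbatim (indeed, since $d(x)^{\gamma }$ is bounded below on the bounded domain $\Omega $, $(\mathrm{H}_{3})$ implies $(\mathrm{H}_{2})$, so Theorem \ref{T2} applies as a black box), obtain a $W_{b}^{1,p_{1}}\times W_{b}^{1,p_{2}}$ solution with the location bounds (\ref{19*}) and zero conormal derivative, and only then apply Lieberman's Neumann (conormal) regularity once, to the actual solution, whose right-hand side $f_{i}(\cdot ,u,v)-|u|^{p_{i}-2}u$ lies in $L^{\infty }(\Omega )$ thanks to (\ref{19*}) and $(\mathrm{H}_{3})$; since $u,v$ are bounded by the ordered pair, the hypotheses of \cite{L} are met and the upgrade to $\mathcal{C}^{1,\tau }(\overline{\Omega })$ is legitimate. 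Both arguments are sound. Your a posteriori bootstrap is more economical and cleanly separates existence from regularity; the paper's construction buys the uniform constant $L_{0}$ in (\ref{309*}) for the whole family of auxiliary solutions and compactness in the stronger $\mathcal{C}^{1}$ topology, which is what lets it run Schauder directly in spaces of continuous functions. Your closing remark is also accurate and worth keeping: the decisive point in either route is that the $\mathcal{C}^{1,\tau }$ estimate must come from a Neumann-compatible regularity theorem (Lieberman \cite{L}), not from the Dirichlet result \cite[Lemma 3.1]{H}, and it is exactly the boundedness in $(\mathrm{H}_{3})$, unavailable under $(\mathrm{H}_{2})$, that makes this applicable.
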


\begin{proof}
	The proof is similar in the spirit to the one of Theorem \ref{T2}. Given $%
	(z_{1},z_{2})\in \mathcal{C}(\overline{\Omega })\times \mathcal{C}(\overline{%
		\Omega })$, we introduce the auxiliary system 
	\begin{equation}
		\left\{ 
		\begin{array}{ll}
			-\Delta _{p_{1}}u+|u|^{p_{1}-2}u=f_{1}(x,\mathrm{T}_{1}(z_{1}),\mathrm{T}%
			_{2}(z_{2})) & \text{in }\Omega , \\ 
			-\Delta _{p_{2}}v+|v|^{p_{2}-2}v=f_{2}(x,\mathrm{T}_{1}(z_{1}),\mathrm{T}%
			_{2}(z_{2})) & \text{in }\Omega , \\ 
			\frac{\partial u}{\partial \eta }=\frac{\partial v}{\partial \eta }=0 & 
			\text{on }\partial \Omega ,%
		\end{array}%
		\right.  \label{301*}
	\end{equation}%
	where the operators $\mathrm{T}_{1}$ and $\mathrm{T}_{2}$ are defined by (%
	\ref{0}). Notice that $(\mathrm{H}_{3})$ together with Minty-Browder Theorem
	(see, e.g., \cite[Theorem V.15]{B}) ensure that (\ref{301*}) admits a unique
	solution $(u,v)\in W^{1,p_{1}}(\Omega )\times W^{1,p_{2}}(\Omega ).$ Let us
	introduce the operator 
	\[
	\begin{array}{cccc}
		\mathcal{T}: & \mathcal{C}(\overline{\Omega })\times \mathcal{C}(\overline{%
			\Omega }) & \rightarrow & \mathcal{C}^{1}(\overline{\Omega })\times \mathcal{%
			C}^{1}(\overline{\Omega }) \\ 
		& (z_{1},z_{2}) & \mapsto & (u,v).%
	\end{array}%
	\]%
	Observe from (\ref{301*}) that any fixed point of $\mathcal{T}$ within $[%
	\underline{u},\overline{u}]\times \lbrack \underline{v},\overline{v}]$
	coincides with the weak solution of $(\mathrm{P}_{f_{1},f_{2}})$.
	
	By $(\mathrm{H}_{3})$ and according to the regularity result \cite{L}, it
	follows that $(u,v)\in \mathcal{C}^{1,\tau }(\overline{\Omega })\times 
	\mathcal{C}^{1,\tau }(\overline{\Omega })$ and 
	\begin{equation}
		\left\Vert u\right\Vert _{\mathcal{C}^{1,\tau }(\overline{\Omega }%
			)},\left\Vert v\right\Vert _{\mathcal{C}^{1,\tau }(\overline{\Omega })}\leq
		L_{0},  \label{309*}
	\end{equation}%
	for some constant $L_{0}>0$ independent of $u$ and $v$. Then, the
	compactness of the embedding $\mathcal{C}^{1,\tau }(\overline{\Omega }%
	)\subset \mathcal{C}^{1}(\overline{\Omega })$ implies that $\mathcal{T}(%
	\mathcal{C}(\overline{\Omega })\times \mathcal{C}(\overline{\Omega }))$ is a
	relatively compact subset of $\mathcal{C}^{1}(\overline{\Omega })\times 
	\mathcal{C}^{1}(\overline{\Omega })$. This proves that $\mathcal{T}$ is
	compact.
	
	Next, we show the continuity of $\mathcal{T}$ with respect to the topology
	of $\mathcal{C}(\overline{\Omega })\times \mathcal{C}(\overline{\Omega })$.
	Let $(z_{1,n},z_{2,n})\rightarrow (z_{1},z_{2})$ in $\mathcal{C}(\overline{%
		\Omega })\times \mathcal{C}(\overline{\Omega })$ for all $n$ and denote $%
	\left( u_{n},v_{n}\right) =\mathcal{T}(z_{1,n},z_{2,n})$. Repeating the
	previous argument in the proof of Theorem \ref{T2} we obtain 
	\[
	\begin{array}{c}
		(u_{n},v_{n})\rightarrow (u,v)\text{ in }W^{1,p_{1}}(\Omega )\times
		W^{1,p_{2}}(\Omega ),%
	\end{array}%
	\]%
	for certain $\left( u,v\right) $ in $W^{1,p_{1}}(\Omega )\times
	W^{1,p_{2}}(\Omega )$ and $\left( u,v\right) =\mathcal{T}(z_{1},z_{2})$. On
	the basis of \cite{L}, the sequence $\{\left( u_{n},v_{n}\right) \}$ is
	bounded in $\mathcal{C}^{1,\tau }(\overline{\Omega })\times \mathcal{C}%
	^{1,\tau }(\overline{\Omega })$ for certain $\tau \in (0,1)$. Then, through
	the compact embedding $\mathcal{C}^{1,\tau }(\overline{\Omega })\subset 
	\mathcal{C}^{1}(\overline{\Omega })$, along a relabeled subsequence, there
	holds $(u_{n},v_{n})\rightarrow (u,v)$ in $\mathcal{C}^{1}(\overline{\Omega }%
	)\times \mathcal{C}^{1}(\overline{\Omega })$, showing that $\mathcal{T}$ is
	continuous.
	
	We are thus in a position to apply Schauder's fixed point theorem to the map 
	$\mathcal{T}$, which establishes the existence of $(u,v)\in \mathcal{C}^{1}(%
	\overline{\Omega })\times \mathcal{C}^{1}(\overline{\Omega })$ satisfying $%
	(u,v)=\mathcal{T}(u,v).$ By $(\mathrm{H}_{3})$ together with \cite{L}, we
	infer that $(u,v)\in \mathcal{C}^{1,\tau }(\overline{\Omega })\times 
	\mathcal{C}^{1,\tau }(\overline{\Omega }),$ $\tau \in (0,1)$. The rest of
	the proof runs as the one of Theorem \ref{T2}.
\end{proof}

\begin{remark}
	Theorems \ref{T2} and \ref{T4} remain valid if we replace Neumann boundary
	conditions with Dirichlet ones.
\end{remark}

\section{Abstract results}

\label{S4}

\begin{theorem}
	\label{T7}For a constant $\Lambda >0$, let $(\underline{u}_{\Lambda },%
	\underline{v}_{\Lambda })$ and $(\overline{u}_{\Lambda },\overline{v}%
	_{\Lambda })$ be sub-supersolutions pairs of problem $(\mathrm{P})$. Assume (%
	\ref{c1}) holds and suppose there exists a constant $\rho >0$ such that 
	\begin{equation}
		\underline{u}_{\Lambda },\underline{v}_{\Lambda }>\rho \text{ a.e. in }%
		\overline{\Omega }.  \label{c4}
	\end{equation}
	Then, problem $(\mathrm{P})$ admits a solution $(u,v)\in int\mathcal{C}_{+}^{1,\tau }(\overline{\Omega })\times int\mathcal{C}_{+}^{1,\tau }(\overline{%
		\Omega }),$ for certain $\tau \in (0,1)$, within $[\underline{u}_{\Lambda },%
	\overline{u}_{\Lambda }]\times \lbrack \underline{v}_{\Lambda },\overline{v}%
	_{\Lambda }]$. Moreover, if $\alpha _{2},\beta _{1}\in (-1,0)$ and%
	\begin{equation}
		\max \{\frac{-\gamma _{1}\alpha _{i}}{p_{1}-1},\frac{-\gamma _{2}\beta _{i}}{%
			p_{2}-1}\}<p_{i}-1,\text{ }i=1,2,  \label{c10}
	\end{equation}%
	with%
	\begin{equation*}
		\gamma _{i}=\max \{-\alpha _{i},-\beta _{i}\},
	\end{equation*}%
	then, $(u,v)$ is unique.
\end{theorem}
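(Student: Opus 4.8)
The statement comprises an existence-cum-regularity assertion and a uniqueness assertion, and I would attack them in turn. For the first part the decisive point is that the lower bound (\ref{c4}) removes the singularity from the order interval. Indeed, on $[\underline{u}_{\Lambda},\overline{u}_{\Lambda}]\times[\underline{v}_{\Lambda},\overline{v}_{\Lambda}]$ one has $u,v\in[\rho,C]$ with $C:=\max\{\|\overline{u}_{\Lambda}\|_{\infty},\|\overline{v}_{\Lambda}\|_{\infty}\}$; since every exponent in (\ref{c1}) lies in $(-1,p_i-1)$, each negative power is controlled by a $\rho$-power from above and each nonnegative power by a $C$-power, so that $|f_i|\le M$ on the rectangle, i.e. $(\mathrm{H}_3)$ holds with $f_1(x,u,v)=u^{\alpha_1}+v^{\beta_1}$ and $f_2(x,u,v)=u^{\alpha_2}+v^{\beta_2}$. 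As the given pair is a sub-supersolution pair it furnishes $(\mathrm{H}_1)$, so Theorem \ref{T4} applies and yields a solution $(u,v)\in\mathcal{C}^{1,\tau}(\overline{\Omega})\times\mathcal{C}^{1,\tau}(\overline{\Omega})$ obeying the order relations and $\frac{\partial u}{\partial\eta}=\frac{\partial v}{\partial\eta}=0$. Finally $u\ge\underline{u}_{\Lambda}>\rho$ and $v\ge\underline{v}_{\Lambda}>\rho$ on the compact set $\overline{\Omega}$ place $(u,v)$ in $int\mathcal{C}_{+}^{1,\tau}(\overline{\Omega})\times int\mathcal{C}_{+}^{1,\tau}(\overline{\Omega})$.

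For the uniqueness part I would argue by a Krasnoselskii-type scaling comparison, now that all four exponents are negative. Any two solutions $(u_1,v_1),(u_2,v_2)$ in $int\mathcal{C}_{+}^{1,\tau}$ are squeezed between positive constants, hence pairwise comparable, so the four ratios $u_2/u_1$, $v_2/v_1$, $u_1/u_2$, $v_1/v_2$ are continuous and bounded away from $0$ and $\infty$ on $\overline{\Omega}$. Writing $M_u=\sup(u_2/u_1)$, $m_u=\inf(u_2/u_1)$ and likewise $M_v,m_v$, the plan is to show $M_u=m_u=M_v=m_v=1$. The engine is the $(p_i-1)$-homogeneity of $w\mapsto-\Delta_{p_i}w+|w|^{p_i-2}w$ together with the weak comparison principle for this strictly monotone operator under Neumann conditions (test with $(\cdot)^+$; no boundary term survives because both functions have vanishing normal derivative).

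Concretely, inserting $u_2\le M_uu_1$, $v_2\le M_vv_1$ into the first equation and using $\alpha_1,\beta_1<0$ gives the pointwise bound $-\Delta_{p_1}u_2+u_2^{p_1-1}\ge\min\{M_u^{-|\alpha_1|},M_v^{-|\beta_1|}\}(u_1^{\alpha_1}+v_1^{\beta_1})$, whence comparison against the corresponding multiple of $u_1$ forces a lower bound on $m_u$ in terms of $M_u,M_v$; symmetrically the lower ratios $m_u,m_v$ yield upper bounds on $M_u,M_v$, and the second equation supplies the analogous pair for $m_v,M_v$. Passing to logarithms and composing these four relations produces a homogeneous system of the form $\log M_u\le\max\{c_{11}\log M_u,c_{12}\log M_v\}$, $\log M_v\le\max\{c_{21}\log M_u,c_{22}\log M_v\}$, in which each coefficient $c_{ij}$ is a product of two exponents divided by a product of two factors $p_k-1$. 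Estimating every such product by the maximal singular exponent $\gamma_i=\max\{-\alpha_i,-\beta_i\}$ of the $i$-th equation, condition (\ref{c10}) guarantees that all $c_{ij}<1$; the system then admits only the trivial solution $\log M_u=\log M_v=0$, after which the lower bounds force $m_u=m_v=1$ as well, giving $(u_1,v_1)=(u_2,v_2)$.

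The crux is the competitive coupling: each equation's estimate mixes both unknowns, and because the cross terms $v^{\beta_1}$, $u^{\alpha_2}$ carry negative exponents they scale against the comparison, so no single-component argument closes. One must therefore keep all four extremal ratios and track their mutual dependence — including the sign casework for which of the two solutions dominates, which a symmetry reduction (replacing the pair by its reciprocal when needed) handles — and verify that the composed coefficient matrix is contractive. It is precisely this two-step composition, rather than a one-shot estimate, that lowers the smallness threshold to the product form (\ref{c10}); making the composition and the attendant comparison rigorous is the delicate heart of the argument.
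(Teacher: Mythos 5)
Your proposal is correct and takes essentially the same route as the paper: existence follows by observing that (\ref{c4}) makes both right-hand sides bounded on the rectangle so that Theorem \ref{T4} applies, and uniqueness is the Krasnoselskii-type two-step scaling comparison, resting on the $(p_{i}-1)$-homogeneity of $w\mapsto -\Delta _{p_{i}}w+|w|^{p_{i}-2}w$, the weak comparison principle under Neumann conditions, and condition (\ref{c10}) to make the composed exponents subcritical. The only difference is bookkeeping: the paper tracks the single extremal constant $\tau =\sup \{c>0:\ cu_{2}\leq u_{1},\ cv_{2}\leq v_{1}\}$ and contradicts its maximality, whereas you track four extremal ratios and close a contraction system in their logarithms --- an equivalent reformulation of the same argument.
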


\begin{proof}
	By (\ref{c1}), (\ref{c4}) and for all $(u,v)\in \lbrack \underline{u}%
	_{\Lambda },\overline{u}_{\Lambda }]\times \lbrack \underline{v}_{\Lambda },%
	\overline{v}_{\Lambda }],$ we have%
	\[
	u^{\alpha _{1}}+v^{\beta _{1}}\leq \left\{ 
	\begin{array}{ll}
		\underline{u}_{\Lambda }^{\alpha _{1}}+\overline{v}_{\Lambda }^{\beta _{1}}
		& \text{ if }\beta _{1}>0 \\ 
		\underline{u}_{\Lambda }^{\alpha _{1}}+\underline{v}_{\Lambda }^{\beta _{1}}
		& \text{ if }\beta _{1}<0%
	\end{array}%
	\right. \leq \left\{ 
	\begin{array}{ll}
		\rho ^{\alpha _{1}}+\left\Vert \overline{v}_{\Lambda }\right\Vert _{\infty
		}^{\beta _{1}} & \text{ if }\beta _{1}>0 \\ 
		\rho ^{\alpha _{1}}+\rho ^{\beta _{1}} & \text{ if }\beta _{1}<0,%
	\end{array}%
	\right.
	\]%
	as well as%
	\[
	u^{\alpha _{2}}+v^{\beta _{2}}\leq \left\{ 
	\begin{array}{ll}
		\overline{u}_{\Lambda }^{\alpha _{2}}+\underline{v}_{\Lambda }^{\beta _{2}}
		& \text{ if }\alpha _{2}>0 \\ 
		\underline{u}_{\Lambda }^{\alpha _{2}}+\underline{v}_{\Lambda }^{\beta _{2}}
		& \text{ if }\alpha _{2}<0%
	\end{array}%
	\right. \leq \left\{ 
	\begin{array}{ll}
		\left\Vert \overline{u}_{\Lambda }\right\Vert _{\infty }^{\alpha _{2}}+\rho
		^{\beta _{2}} & \text{ if }\alpha _{2}>0 \\ 
		\rho ^{\alpha _{2}}+\rho ^{\beta _{2}} & \text{ if }\alpha _{2}<0.%
	\end{array}%
	\right.
	\]%
	Then, thanks to Theorem \ref{T4}, we conclude that $(\mathrm{P})$ possesses
	a solution $(u,v)\in \mathcal{C}^{1,\tau }(\overline{\Omega })\times 
	\mathcal{C}^{1,\tau }(\overline{\Omega }),$ with suitable $\tau \in (0,1),$
	within $[\underline{u}_{\Lambda },\overline{u}_{\Lambda }]\times \lbrack 
	\underline{v}_{\Lambda },\overline{v}_{\Lambda }]$. Due to (\ref{c4}), we
	deduce that $(u,v)\in int\mathcal{C}_{+}^{1,\tau }(\overline{\Omega })\times int\mathcal{C}_{+}^{1,\tau }(\overline{\Omega })$.
	
	We proceed to show that $(u,v)$ is a unique solution in $[\underline{u}%
	_{\Lambda },\overline{u}_{\Lambda }]\times \lbrack \underline{v}_{\Lambda },%
	\overline{v}_{\Lambda }]$ for $\alpha _{2},\beta _{1}\in (-1,0)$. To this
	end, let $(u_{1},v_{1})$ and $(u_{2},v_{2})$ be two distinct positive
	solutions of $(\mathrm{P})$ within $[\underline{u}_{\Lambda },\overline{u}%
	_{\Lambda }]\times \lbrack \underline{v}_{\Lambda },\overline{v}_{\Lambda }]$%
	. Set%
	\[
	\tau =\sup \{c\in \mathbb{R}_{+}\text{, \ }cu_{2}\leq u_{1}\text{ \ and }%
	cv_{2}\leq v_{1}\text{ \ in\ }\Omega \}.
	\]%
	Then $0<\tau <\infty $ because 
	\begin{equation}
		\min \{\frac{\rho }{||u_{2}||_{\infty }},\frac{\rho }{||v_{2}||_{\infty }}%
		\}\leq \tau \leq \max \{\frac{||u_{1}||_{\infty }}{\rho },\frac{%
			||v_{1}||_{\infty }}{\rho }\}  \label{29}
	\end{equation}%
	If we manage to show that $\tau \geq 1$, we are done as this entails $%
	u_{2}\leq u_{1}$ and $v_{2}\leq v_{1}$ in $\Omega $ and thus, by
	interchanging the roles of $(u_{1},v_{1})$ and $(u_{2},v_{2})$ we get $%
	u_{2}\geq u_{1}$ and $v_{2}\geq v_{1}$ in $\Omega $. By contradiction,
	suppose that $0<\tau <1.$ Then, by (\ref{c1}) and since $\alpha _{2},\beta
	_{1}<0$, we infer that%
	\[
	\min \{\tau ^{-\alpha _{i}},\tau ^{-\beta _{i}}\}\geq \tau ^{\gamma _{i}}%
	\text{ \ and \ }\min \{\tau ^{-\frac{\gamma _{1}\alpha _{i}}{p_{1}-1}},\tau
	^{-\frac{\gamma _{2}\beta _{i}}{p_{2}-1}}\}\geq \tau ^{\hat{\gamma}_{i}}%
	\text{,}
	\]%
	with%
	\[
	\hat{\gamma}_{i}=\max \{\frac{-\gamma _{1}\alpha _{i}}{p_{1}-1},\frac{%
		-\gamma _{2}\beta _{i}}{p_{2}-1}\}>0,\text{for  }i=1,2.
	\]%
	A direct computations show that 
	\[
	\begin{array}{l}
		-\Delta _{p_{1}}u_{2}+|u_{2}|^{p_{1}-2}u_{2}=u_{2}^{\alpha
			_{1}}+v_{2}^{\beta _{1}}=\frac{(\tau u_{2})^{\alpha _{1}}}{\tau ^{\alpha
				_{1}}}+\frac{(\tau v_{2})^{\beta _{1}}}{\tau ^{\beta _{1}}} \\ 
		\geq \tau ^{\gamma _{1}}\left( (\tau u_{2})^{\alpha _{1}}+(\tau
		v_{2})^{\beta _{1}}\right) \geq \tau ^{\gamma _{1}}(u_{1}^{\alpha
			_{1}}+v_{1}^{\beta _{1}}) \\ 
		=\tau ^{\gamma _{1}}\left( -\Delta
		_{p_{1}}u_{1}+|u_{1}|^{p_{1}-2}u_{1}\right) \\ 
		=-\Delta _{p_{1}}(\tau ^{\frac{\gamma _{1}}{p_{1}-1}}u_{1})+|\tau ^{\frac{%
				\gamma _{1}}{p_{1}-1}}u_{1}|^{p_{1}-2}(\tau ^{\frac{\gamma _{1}}{p_{1}-1}%
		}u_{1})%
	\end{array}%
	\]%
	and similarly%
	\[
	\begin{array}{l}
		-\Delta _{p_{2}}v_{2}+|v_{2}|^{p_{2}-2}v_{2}=u_{2}^{\alpha
			_{2}}+v_{2}^{\beta _{2}}=\frac{(\tau u_{2})^{\alpha _{2}}}{\tau ^{\alpha
				_{2}}}+\frac{(\tau v_{2})^{\beta _{2}}}{\tau ^{\beta _{2}}} \\ 
		\geq \tau ^{\gamma _{2}}\left( (\tau u_{2})^{\alpha _{2}}+(\tau
		v_{2})^{\beta _{2}}\right) \geq \tau ^{\gamma _{2}}(u_{1}^{\alpha
			_{2}}+v_{1}^{\beta _{2}}) \\ 
		=-\Delta _{p_{2}}(\tau ^{\frac{\gamma _{2}}{p_{2}-1}}v_{1})+|\tau ^{\frac{%
				\gamma _{2}}{p_{2}-1}}v_{1}|^{p_{2}-2}(\tau ^{\frac{\gamma _{2}}{p_{2}-1}%
		}v_{1}).%
	\end{array}%
	\]%
	The weak comparison principle (see \cite[Lemma 3.2]{ST}) yields%
	\begin{equation}
		u_{2}\geq \tau ^{\frac{\gamma _{1}}{p_{1}-1}}u_{1}\text{ \ and \ }v_{2}\geq
		\tau ^{\frac{\gamma _{2}}{p_{2}-1}}v_{1}\text{ \ in }\Omega \text{.}
		\label{33}
	\end{equation}%
	Using (\ref{33}) in the equations for $(u_{1},v_{1})$, we get%
	\[
	\begin{array}{l}
		-\Delta _{p_{1}}u_{1}+|u_{1}|^{p_{1}-2}u_{1}=u_{1}^{\alpha
			_{1}}+v_{1}^{\beta _{1}}=\frac{(\tau ^{\frac{\gamma _{1}}{p_{1}-1}%
			}u_{1})^{\alpha _{1}}}{\tau ^{\frac{\gamma _{1}\alpha _{1}}{p_{1}-1}}}+\frac{%
			(\tau ^{\frac{\gamma _{2}}{p_{2}-1}}v_{1})^{\beta _{1}}}{\tau ^{\frac{\gamma
					_{2}\beta _{1}}{p_{2}-1}}} \\ 
		\geq \tau ^{\hat{\gamma}_{1}}(u_{2}^{\alpha _{1}}+v_{2}^{\beta _{1}})=\tau ^{%
			\hat{\gamma}_{1}}\left( -\Delta _{p_{1}}u_{2}+|u_{2}|^{p_{1}-2}u_{2}\right)
		\\ 
		=-\Delta _{p_{1}}(\tau ^{\frac{\hat{\gamma}_{1}}{p_{1}-1}}u_{2})+|\tau ^{%
			\frac{\hat{\gamma}_{1}}{p_{1}-1}}u_{2}|^{p_{1}-2}(\tau ^{\frac{\hat{\gamma}%
				_{1}}{p_{1}-1}}u_{2}).%
	\end{array}%
	\]%
	and 
	\[
	\begin{array}{l}
		-\Delta _{p_{2}}v_{1}+|v_{1}|^{p_{2}-2}v_{1}=u_{1}^{\alpha
			_{2}}+v_{1}^{\beta _{2}}=\frac{(\tau ^{\frac{\gamma _{1}}{p_{1}-1}%
			}u_{1})^{\alpha _{2}}}{\tau ^{\frac{\gamma _{1}\alpha _{2}}{p_{1}-1}}}+\frac{%
			(\tau ^{\frac{\gamma _{2}}{p_{2}-1}}v_{1})^{\beta _{2}}}{\tau ^{\frac{\gamma
					_{2}\beta _{2}}{p_{2}-1}}} \\ 
		\geq \tau ^{\hat{\gamma}_{2}}(u_{2}^{\alpha _{2}}+v_{2}^{\beta
			_{2}})=-\Delta _{p_{2}}(\tau ^{\frac{\hat{\gamma}_{2}}{p_{2}-1}}v_{2})+|\tau
		^{\frac{\hat{\gamma}_{2}}{p_{2}-1}}v_{2}|^{p_{2}-2}(\tau ^{\frac{\hat{\gamma}%
				_{2}}{p_{2}-1}}v_{2})\text{ in }\Omega .%
	\end{array}%
	\]%
	Owing to \cite[Lemma 3.2]{ST} we derive 
	\[
	u_{1}\geq \tau ^{\frac{\hat{\gamma}_{1}}{p_{1}-1}}u_{2}\text{ \ and \ }%
	v_{1}\geq \tau ^{\frac{\hat{\gamma}_{2}}{p_{2}-1}}v_{2}\text{ \ in }\Omega .
	\]%
	In view of (\ref{c10}), we deduce that 
	\[
	1>\tau ^{\frac{\hat{\gamma}_{1}}{p_{1}-1}}>\tau \text{ \ and \ }1>\tau ^{%
		\frac{\hat{\gamma}_{2}}{p_{2}-1}}>\tau ,
	\]%
	a contradiction with the definition of $\tau $. Thus, $\tau \geq 1$ and therefore, $%
	(u_{1},v_{1})=(u_{2},v_{2})$, completing the proof of the theorem.
\end{proof}

The existence result in the case of zero trace subsolutions on the boundary $%
\partial \Omega $ is formulated as follows.

\begin{theorem}
	\label{T8}For a constant $\Lambda >0$, let $(\underline{u}_{\Lambda },%
	\underline{v}_{\Lambda })$ and $(\overline{u}_{\Lambda },\overline{v}%
	_{\Lambda })$ be sub-supersolutions pairs of problem $(\mathrm{P})$ and
	assume that assumption (\ref{c1}) holds. If $(\underline{u}_{\Lambda },%
	\underline{v}_{\Lambda })\in W_{0,b}^{1,p_{1}}(\Omega )\times
	W_{0,b}^{1,p_{2}}(\Omega )$ and there exists a constant $c>0$ such that 
	\begin{equation}
		\underline{u}_{\Lambda },\underline{v}_{\Lambda }\geq cd(x)\text{ a.e. in }%
		\Omega,  \label{c6}
	\end{equation}%
	then, problem $(\mathrm{P})$ admits a solution $(u,v)\in
	W_{b}^{1,p_{1}}(\Omega )\times W_{b}^{1,p_{2}}(\Omega )$ within $[\underline{%
		u}_{\Lambda },\overline{u}_{\Lambda }]\times \lbrack \underline{v}_{\Lambda
	},\overline{v}_{\Lambda }]$.
\end{theorem}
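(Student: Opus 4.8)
The plan is to deduce Theorem \ref{T8} from Theorem \ref{T2}, the $L^{\infty}$-version of the sub-supersolution principle, once the right-hand sides $f_1(x,u,v)=u^{\alpha_1}+v^{\beta_1}$ and $f_2(x,u,v)=u^{\alpha_2}+v^{\beta_2}$ of $(\mathrm{P})$ are shown to obey the Hardy-type bound $(\mathrm{H}_2)$ on the order interval $[\underline{u}_\Lambda,\overline{u}_\Lambda]\times[\underline{v}_\Lambda,\overline{v}_\Lambda]$. Since $(\underline{u}_\Lambda,\underline{v}_\Lambda),(\overline{u}_\Lambda,\overline{v}_\Lambda)$ is by hypothesis a sub-supersolution pair of $(\mathrm{P})$, assumption $(\mathrm{H}_1)$ (the inequalities (\ref{c2})--(\ref{c3}) together with the sign condition (\ref{c1*}) and the ordering $\underline{u}_\Lambda\leq\overline{u}_\Lambda$, $\underline{v}_\Lambda\leq\overline{v}_\Lambda$) is already at our disposal, so the only point to verify is $(\mathrm{H}_2)$.

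First I would record the two-sided pointwise control available on the rectangle. For any $(u,v)\in[\underline{u}_\Lambda,\overline{u}_\Lambda]\times[\underline{v}_\Lambda,\overline{v}_\Lambda]$, the lower bound (\ref{c6}) gives $u\geq\underline{u}_\Lambda\geq cd(x)$ and $v\geq\underline{v}_\Lambda\geq cd(x)$, while the membership $(\overline{u}_\Lambda,\overline{v}_\Lambda)\in W_b^{1,p_1}(\Omega)\times W_b^{1,p_2}(\Omega)$ gives $u\leq\|\overline{u}_\Lambda\|_\infty$ and $v\leq\|\overline{v}_\Lambda\|_\infty$. Because $\alpha_1,\beta_2\in(-1,0)$ by (\ref{c1}), the genuinely singular terms are estimated directly from the lower bound: $u^{\alpha_1}\leq c^{\alpha_1}d(x)^{\alpha_1}$ and $v^{\beta_2}\leq c^{\beta_2}d(x)^{\beta_2}$. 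The remaining terms $v^{\beta_1}$ and $u^{\alpha_2}$ are handled by a sign case-split: when the exponent is nonnegative the term is bounded by $\|\overline{v}_\Lambda\|_\infty^{\beta_1}$ resp. $\|\overline{u}_\Lambda\|_\infty^{\alpha_2}$, and when it is negative it is again bounded by $c^{\beta_1}d(x)^{\beta_1}$ resp. $c^{\alpha_2}d(x)^{\alpha_2}$, with exponent lying in $(-1,0)$ thanks to (\ref{c1}).

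To package these into the single bound required by $(\mathrm{H}_2)$ I would let $\gamma$ be the most singular exponent actually occurring, i.e. the minimum of the negative exponents among $\alpha_1,\beta_1,\alpha_2,\beta_2$; since $\alpha_1,\beta_2<0$ this minimum is well defined, and by (\ref{c1}) it satisfies $\gamma\in(-1,0)$. Because $\Omega$ is bounded, $d$ is bounded on $\overline{\Omega}$, so for $\gamma<0$ one has $d(x)^{\gamma}\geq(\operatorname{diam}\Omega)^{\gamma}>0$; consequently every bounded contribution, as well as every milder singular power $d(x)^{s}$ with $\gamma\leq s<0$ (via $d(x)^{s-\gamma}\leq(\operatorname{diam}\Omega)^{s-\gamma}$), is dominated by a constant multiple of $d(x)^{\gamma}$. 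Collecting the estimates yields $\max_i|f_i(x,u,v)|\leq Cd(x)^{\gamma}$ on the rectangle, which is precisely $(\mathrm{H}_2)$, and in particular (via (\ref{54})) guarantees that the integrals in (\ref{c2})--(\ref{c3}) are finite.

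With $(\mathrm{H}_1)$--$(\mathrm{H}_2)$ in force, Theorem \ref{T2} produces a solution $(u,v)\in W_b^{1,p_1}(\Omega)\times W_b^{1,p_2}(\Omega)$ of $(\mathrm{P}_{f_1,f_2})$ satisfying $\underline{u}_\Lambda\leq u\leq\overline{u}_\Lambda$, $\underline{v}_\Lambda\leq v\leq\overline{v}_\Lambda$ and $\partial u/\partial\eta=\partial v/\partial\eta=0$ on $\partial\Omega$; the strict positivity $u,v>0$ in $\Omega$ demanded by $(\mathrm{P})$ is inherited from $u\geq cd(x)>0$ and $v\geq cd(x)>0$, so $(u,v)$ solves $(\mathrm{P})$ itself within the prescribed order interval. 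The step I expect to require the most care is the verification of $(\mathrm{H}_2)$: the role of hypothesis (\ref{c6}) is exactly that the $d(x)$-rate on the subsolutions is strong enough to convert the pointwise singular powers $u^{\alpha_1},v^{\beta_2}$ (and the negative-exponent instances of $u^{\alpha_2},v^{\beta_1}$) into admissible Hardy weights $d(x)^{\gamma}$ with $\gamma>-1$; were the subsolutions only positive, without the $d(x)$ rate, the effective exponent could reach $-1$ and $(\mathrm{H}_2)$ would break down. This is also why the conclusion is merely an $L^{\infty}$-bound rather than the $\mathcal{C}^{1,\tau}$-bound of Theorem \ref{T7}, whose constant lower bound $\rho>0$ made the nonlinearities bounded and thereby invoked Theorem \ref{T4}.
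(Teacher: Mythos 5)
Your proposal is correct and follows essentially the same route as the paper: the paper's proof likewise reduces Theorem \ref{T8} to Theorem \ref{T2} by verifying $(\mathrm{H}_{2})$ on the order interval, splitting into the cases $\beta_{1}>0$/$\beta_{1}<0$ and $\alpha_{2}>0$/$\alpha_{2}<0$, bounding the singular terms by powers of $d(x)$ via (\ref{c6}) and the remaining terms by the sup-norms of the supersolutions. Your explicit packaging of the several powers $d(x)^{s}$ into a single admissible exponent $\gamma\in(-1,0)$ (the most negative one) is a small clarification that the paper leaves implicit before invoking Theorem \ref{T2}.
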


\begin{proof}
	By (\ref{c1}) and (\ref{c6}), for all $(u,v)\in \lbrack \underline{u}%
	_{\Lambda },\overline{u}_{\Lambda }]\times \lbrack \underline{v}_{\Lambda },%
	\overline{v}_{\Lambda }],$ we have%
	\begin{eqnarray*}
		u^{\alpha _{1}}+v^{\beta _{1}} &\leq &\left\{ 
		\begin{array}{ll}
			\underline{u}_{\Lambda }^{\alpha _{1}}+\overline{v}_{\Lambda }^{\beta _{1}}
			& \text{ if }\beta _{1}>0 \\ 
			\underline{u}_{\Lambda }^{\alpha _{1}}+\underline{v}_{\Lambda }^{\beta _{1}}
			& \text{ if }\beta _{1}<0%
		\end{array}%
		\right. \\
		&\leq &\left\{ 
		\begin{array}{ll}
			(cd(x))^{\alpha _{1}}+\left\Vert \overline{v}_{\Lambda }\right\Vert _{\infty
			}^{\beta _{1}} & \text{ if }\beta _{1}>0 \\ 
			(cd(x))^{\alpha _{1}}+(cd(x))^{\beta _{1}} & \text{ if }\beta _{1}<0%
		\end{array}%
		\right. \\
		&\leq &\left\{ 
		\begin{array}{ll}
			(c^{\alpha _{1}}+d(x)^{-\alpha _{1}}\left\Vert \overline{v}_{\Lambda
			}\right\Vert _{\infty }^{\beta _{1}})d(x)^{\alpha _{1}} & \text{ if }\beta
			_{1}>0 \\ 
			(c^{\alpha _{1}}+c^{\beta _{1}})\max \{d(x))^{\alpha _{1}},d(x))^{\beta
				_{1}}\} & \text{ if }\beta _{1}<0%
		\end{array}%
		\right. \\
		&\leq &C_{0}\left\{ 
		\begin{array}{ll}
			d(x)^{\alpha _{1}} & \text{ if }\beta _{1}>0 \\ 
			\max \{d(x)^{\alpha _{1}},d(x)^{\beta _{1}}\} & \text{ if }\beta _{1}<0%
		\end{array}%
		\right. \text{, in }\Omega
	\end{eqnarray*}%
	and similarly
	\begin{eqnarray*}
		u^{\alpha _{2}}+v^{\beta _{2}} &\leq &\left\{ 
		\begin{array}{ll}
			\overline{u}_{\Lambda }^{\alpha _{2}}+\underline{v}_{\Lambda }^{\beta _{2}}
			& \text{ if }\alpha _{2}>0 \\ 
			\underline{u}_{\Lambda }^{\alpha _{2}}+\underline{v}_{\Lambda }^{\beta _{2}}
			& \text{ if }\alpha _{2}<0%
		\end{array}%
		\right. \leq \left\{ 
		\begin{array}{ll}
			\left\Vert \overline{u}_{\Lambda }\right\Vert _{\infty }^{\alpha
				_{2}}+(cd(x))^{\beta _{2}} & \text{ if }\alpha _{2}>0 \\ 
			(cd(x))^{\alpha _{2}}+(cd(x))^{\beta _{2}} & \text{ if }\alpha _{2}<0.%
		\end{array}%
		\right. \\
		&\leq &\tilde{C}_{0}\left\{ 
		\begin{array}{ll}
			d(x)^{\beta _{2}} & \text{ if }\alpha _{2}>0 \\ 
			\max \{d(x)^{\alpha _{2}},d(x)^{\beta _{2}}\} & \text{ if }\alpha _{2}<0%
		\end{array}%
		\right. \text{, in }\Omega ,
	\end{eqnarray*}%
	for certain constants $C_{0},\tilde{C}_{0}>0$. Then, thanks to Theorem \ref%
	{T2}, we conclude that there exists a solution $(u,v)\in
	W_{b}^{1,p_{1}}(\Omega )\times W_{b}^{1,p_{2}}(\Omega )$ of problem $(%
	\mathrm{P})$ within $[\underline{u}_{\Lambda },\overline{u}_{\Lambda
	}]\times \lbrack \underline{v}_{\Lambda },\overline{v}_{\Lambda }]$. This
	proves the theorem.
\end{proof}

\begin{remark}
	\label{R1}It is worth noting that the supersolution $(\overline{u}_{\Lambda
	},\overline{v}_{\Lambda })$ in Theorem \ref{T8} should not belong to $%
	W_{0}^{1,p_{1}}(\Omega )\times W_{0}^{1,p_{2}}(\Omega )$ because if so, the
	solution $(u,v)$ would be of zero trace on the boundary $\partial \Omega .$
	Hence, \cite[Lemma 3.1]{H} ensures that $(u,v)\in \mathcal{C}^{1,\tau }(%
	\overline{\Omega })\times \mathcal{C}^{1,\tau }(\overline{\Omega })$ which,
	by Hopf's Lemma (see, e.g., \cite{AC}), implies that $\frac{\partial 
		\overline{u}}{\partial \eta },\frac{\partial \overline{v}}{\partial \eta }<0$
	on $\partial \Omega $, absurd.
\end{remark}

\begin{remark}
	The argument used to show the uniqueness result of Theorem \ref{T7} is not
	applicable in the context of Theorem \ref{T8}. Being impossible to construct supersolutions for problem $(\mathrm{P})$ behaving like the distance function $d(x)$ (see Remark \ref{R1}), then it would not be possible to get	an estimate of type (\ref{29}).
\end{remark}

\section{Existence and uniqueness results}

\label{S2}

Our goal is to construct sub- and super-solution pairs of $(\mathrm{P})$.
With this aim, consider the following nonlinear Dirichlet and Neumann
eigenvalue problems%
\begin{equation}
	\left\{ 
	\begin{array}{l}
		-\Delta _{p_{i}}\phi _{1,p_{i}}+|\phi _{1,p_{i}}|^{p_{i}-2}\phi
		_{1,p_{i}}=\lambda _{1,p_{i}}|\phi _{1,p_{i}}|^{p_{i}-2}\phi _{1,p_{i}}\text{
			in }\Omega \\ 
		\phi _{1,p_{i}}=0\text{ on }\partial \Omega ,\text{ }i=1,2,%
	\end{array}%
	\right.  \label{3}
\end{equation}%
\begin{equation}
	\left\{ 
	\begin{array}{l}
		-\Delta _{p_{i}}\hat{\phi}_{1,p_{i}}+|\hat{\phi}_{1,p_{i}}|^{p_{i}-2}\hat{%
			\phi}_{1,p_{i}}=\hat{\lambda}_{1,p_{i}}|\hat{\phi}_{1,p_{i}}|^{p_{i}-2}\hat{%
			\phi}_{1,p_{i}}\text{ in }\Omega \\ 
		\frac{\partial \hat{\phi}_{1,p_{i}}}{\partial \eta }=0\text{ on }\partial
		\Omega ,\text{ }i=1,2,%
	\end{array}%
	\right.  \label{3*}
\end{equation}%
where $\phi _{1,p_{i}}\in \mathcal{C}_{+}^{1}(\overline{\Omega })$ and $\hat{%
	\phi}_{1,p_{i}}\in int\mathcal{C}_{+}^{1}(\overline{\Omega })$ are the
eigenfunctions corresponding to the first eigenvalues $\lambda _{1,p_{i}},%
\hat{\lambda}_{1,p_{i}}>0,$ respectively (see \cite{MMP}).
Recall that%
\begin{equation}
	\phi _{1,p_{i}}\geq c_{0}d(x)\text{ in }\Omega ,\text{ \ and \ }\frac{%
		\partial \phi _{1,p_{i}}}{\partial \eta }<0\;\text{on}\;\partial \Omega ,
	\label{5}
\end{equation}%
for a constant $c_{0}>0,$ and there exists a constant $\mu >0$ such that%
\begin{equation}
	\hat{\phi}_{1,p_{i}}(x)>\mu \text{ for all }x\in \overline{\Omega }.
	\label{5*}
\end{equation}%
Consider the homogeneous Dirichlet and Neumann problems%
\begin{equation}
	\left\{ 
	\begin{array}{l}
		-\Delta _{p_{i}}y_{i}+|y_{i}|^{p_{i}-2}y_{i}=1\text{ in }\Omega , \\ 
		y_{i}=0\text{ on }\partial \Omega ,\text{ }i=1,2%
	\end{array}%
	\right.  \label{6}
\end{equation}%
\begin{equation}
	\left\{ 
	\begin{array}{l}
		-\Delta _{p_{i}}\hat{y}_{i}+|\hat{y}_{i}|^{p_{i}-2}\hat{y}_{i}=1\text{ in }%
		\Omega \\ 
		\frac{\partial \hat{y}_{i}}{\partial \eta }=0\text{ on }\partial \Omega ,%
		\text{ }i=1,2,%
	\end{array}%
	\right.  \label{6*}
\end{equation}%
which admit unique solutions $y_{i}\in \mathcal{C}^{1,\tau }(\overline{%
	\Omega })$ and $\hat{y}_{i}\in int\mathcal{C}_{+}^{1}(\overline{\Omega })$
satisfying 
\begin{equation}
	\frac{d}{c}\leq y_{i}\leq cd\;\text{in}\;\Omega ,\;\;\frac{\partial y_{i}}{%
		\partial \eta }<0\;\text{on}\;\partial \Omega ,  \label{4}
\end{equation}%
\begin{equation}
	\frac{\hat{\phi}_{1,p_{i}}}{\hat{c}}\leq \hat{y}_{i}\leq \hat{c}\hat{\phi}%
	_{1,p_{i}}\;\text{in}\;\Omega ,  \label{4*}
\end{equation}%
for some constants $c,\hat{c}>1$ and $L>0$ (see \cite{DM, ST, V} ).

\subsection{$C^{1}$- bound solutions}

\begin{theorem}
	\label{T1} Assume (\ref{c1}) is satisfied. Then, if $\Lambda >0$ is big
	enough, problem $(\mathrm{P})$ admits a solution $(u,v)\in int \mathcal{C}_{+}^{1,\tau }(\overline{\Omega })\times int\mathcal{C}_{+}^{1,\tau }(\overline{\Omega }%
	),$ for certain $\tau \in (0,1)$, such that%
	\begin{equation}
		(u,v)\in \lbrack \Lambda ^{-1}\hat{\phi}_{1,p_{1}},\Lambda \hat{y}%
		_{1}]\times \lbrack \Lambda ^{-1}\hat{\phi}_{1,p_{2}},\Lambda \hat{y}_{2}].
		\label{27}
	\end{equation}%
	which is unique once $\alpha _{2},\beta _{1}\in (-1,0)$ and assumption (\ref%
	{c10}) is fulfilled .
\end{theorem}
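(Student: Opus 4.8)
The plan is to exhibit the box in (\ref{27}) as an admissible sub-supersolution pair for $(\mathrm{P})$ and then let Theorem \ref{T7} do the rest. Concretely, I would take
\[
(\underline{u}_{\Lambda},\underline{v}_{\Lambda}):=(\Lambda^{-1}\hat{\phi}_{1,p_{1}},\Lambda^{-1}\hat{\phi}_{1,p_{2}}),\qquad(\overline{u}_{\Lambda},\overline{v}_{\Lambda}):=(\Lambda\hat{y}_{1},\Lambda\hat{y}_{2}),
\]
where $\hat{\phi}_{1,p_{i}}$ are the Neumann eigenfunctions of (\ref{3*}) and $\hat{y}_{i}$ the Neumann torsion functions of (\ref{6*}). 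All four functions are bounded and strictly positive on $\overline{\Omega}$, so the lower bound (\ref{c4}) holds with $\rho=\Lambda^{-1}\mu$ by (\ref{5*}); the ordering $\Lambda^{-1}\hat{\phi}_{1,p_{i}}\leq\Lambda\hat{y}_{i}$ follows from (\ref{4*}) (which gives $\hat{\phi}_{1,p_{i}}\leq\hat{c}\,\hat{y}_{i}$) as soon as $\Lambda^{2}\geq\hat{c}$; and the normal derivatives of both $\Lambda^{-1}\hat{\phi}_{1,p_{i}}$ and $\Lambda\hat{y}_{i}$ vanish on $\partial\Omega$ by (\ref{3*}) and (\ref{6*}), so the sign condition (\ref{c1*}) holds with equality.

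The heart of the proof is verifying the four weak inequalities (\ref{c2})--(\ref{c3}) with $f_{1}=u^{\alpha_{1}}+v^{\beta_{1}}$ and $f_{2}=u^{\alpha_{2}}+v^{\beta_{2}}$. I would exploit the homogeneity of $-\Delta_{p_{i}}$ together with (\ref{3*}) and (\ref{6*}) to obtain the two key identities
\[
-\Delta_{p_{1}}(\Lambda^{-1}\hat{\phi}_{1,p_{1}})+(\Lambda^{-1}\hat{\phi}_{1,p_{1}})^{p_{1}-1}=\Lambda^{-(p_{1}-1)}\hat{\lambda}_{1,p_{1}}\hat{\phi}_{1,p_{1}}^{p_{1}-1}
\]
and
\[
-\Delta_{p_{1}}(\Lambda\hat{y}_{1})+(\Lambda\hat{y}_{1})^{p_{1}-1}=\Lambda^{p_{1}-1},
\]
together with their analogues for the second component. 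The subsolution inequalities then reduce to the left-hand sides, of order $\Lambda^{-(p_{i}-1)}$, being dominated by the right-hand side; since $\alpha_{1},\beta_{2}<0$ the singular self-terms $\underline{u}_{\Lambda}^{\alpha_{1}}\sim\Lambda^{-\alpha_{1}}$ and $\underline{v}_{\Lambda}^{\beta_{2}}\sim\Lambda^{-\beta_{2}}$ blow up (while the cross terms stay nonnegative), so these inequalities hold for all large $\Lambda$.

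The step I expect to be the main obstacle is the pair of supersolution inequalities, where $\Lambda^{p_{i}-1}$ must dominate the \emph{largest} value of the right-hand side over the box. The positivity lower bounds $\hat{y}_{i}>0$ and $\hat{\phi}_{1,p_{i}}\geq\mu$ render all $x$-dependent factors harmless, so everything reduces to matching powers of $\Lambda$. The self-terms $\overline{u}_{\Lambda}^{\alpha_{1}}\sim\Lambda^{\alpha_{1}}$ and $\overline{v}_{\Lambda}^{\beta_{2}}\sim\Lambda^{\beta_{2}}$ decay and cause no trouble; the difficulty is the cross terms. When $\beta_{1}>0$ the worst case is $\overline{v}_{\Lambda}^{\beta_{1}}\sim\Lambda^{\beta_{1}}$, and the bound $\beta_{1}<p_{1}-1$ in (\ref{c1}) is precisely what yields $\Lambda^{\beta_{1}}\ll\Lambda^{p_{1}-1}$; symmetrically $\alpha_{2}<p_{2}-1$ handles the case $\alpha_{2}>0$. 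When $\beta_{1}<0$ the worst case instead comes from the small component, $\underline{v}_{\Lambda}^{\beta_{1}}\sim\Lambda^{-\beta_{1}}$ (and likewise $\underline{u}_{\Lambda}^{\alpha_{2}}\sim\Lambda^{-\alpha_{2}}$ when $\alpha_{2}<0$), so I must check that $-\beta_{1}<p_{1}-1$ and $-\alpha_{2}<p_{2}-1$ before concluding $\Lambda^{-\beta_{1}},\Lambda^{-\alpha_{2}}\ll\Lambda^{p_{i}-1}$; this is exactly where the precise interplay between the $p_{i}$ and the negative exponents in (\ref{c1}) has to be used. Once all four inequalities are secured, $(\underline{u}_{\Lambda},\underline{v}_{\Lambda})$ and $(\overline{u}_{\Lambda},\overline{v}_{\Lambda})$ form a genuine sub-supersolution pair, and Theorem \ref{T7} immediately yields a solution $(u,v)\in int\mathcal{C}_{+}^{1,\tau}(\overline{\Omega})\times int\mathcal{C}_{+}^{1,\tau}(\overline{\Omega})$ inside the box (\ref{27}); its uniqueness is then the uniqueness clause of Theorem \ref{T7}, available once $\alpha_{2},\beta_{1}\in(-1,0)$ and (\ref{c10}) hold.
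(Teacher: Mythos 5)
Your proposal is correct and is essentially the paper's own proof: the same sub-supersolution pair $\bigl(\Lambda^{-1}\hat{\phi}_{1,p_{1}},\Lambda^{-1}\hat{\phi}_{1,p_{2}}\bigr)$, $\bigl(\Lambda\hat{y}_{1},\Lambda\hat{y}_{2}\bigr)$, the same use of (\ref{3*}), (\ref{6*}), (\ref{4*}), (\ref{5*}) to check (\ref{c1*}), (\ref{c4}) and the ordering, the same sign-based case analysis dominating the right-hand side by $\Lambda^{p_{i}-1}$, and the same appeal to Theorem \ref{T7} for existence, regularity and uniqueness. The only remark worth adding is that the condition you flag in the negative-exponent case, namely $-\beta_{1}<p_{1}-1$ and $-\alpha_{2}<p_{2}-1$, is not literally guaranteed by (\ref{c1}) when $p_{i}<2$; the paper's estimates (\ref{17})--(\ref{17*}) rely on it tacitly in exactly the same way, so this is a shared gloss rather than a defect of your argument.
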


\begin{proof}
	For a constant $\Lambda >0$ which will be specified later, let us show that $%
	\Lambda (\hat{y}_{1},\hat{y}_{2})$ satisfy (\ref{c3}). With this aim, pick $%
	(u,v)\in W^{1,p_{1}}(\Omega )\times W^{1,p_{2}}(\Omega )$ within $[\Lambda
	^{-1}\hat{\phi}_{1,p_{1}},\Lambda \hat{y}_{1}]\times \lbrack \Lambda ^{-1}%
	\hat{\phi}_{1,p_{2}},\Lambda \hat{y}_{2}]$. From (\ref{4*}) and (\ref{c1}),
	if $\beta _{1},\alpha _{2}>0$, we get%
	\begin{equation}
		\begin{array}{l}
			(\Lambda \hat{y}_{1})^{\alpha _{1}}+(\Lambda \hat{y}_{2})^{\beta _{1}}\leq
			(\Lambda \frac{\hat{\phi}_{1,p_{1}}}{\hat{c}})^{\alpha _{1}}+(\Lambda \hat{c}%
			\hat{\phi}_{1,p_{2}})^{\beta _{1}} \\ 
			\leq \hat{c}^{-\alpha _{1}}+(\Lambda \hat{c}||\hat{\phi}_{1,p_{2}}||_{\infty
			})^{\beta _{1}}\leq \Lambda ^{\beta _{1}}(1+(\hat{c}||\hat{\phi}%
			_{1,p_{2}}||_{\infty })^{\beta _{1}})\text{ \ in }\Omega ,%
		\end{array}
		\label{11}
	\end{equation}%
	\begin{equation}
		\begin{array}{l}
			(\Lambda \hat{y}_{1})^{\alpha _{2}}+(\Lambda \hat{y}_{2})^{\beta _{2}}\leq
			(\Lambda \hat{c}\hat{\phi}_{1,p_{1}})^{\alpha _{2}}+(\Lambda \frac{\hat{\phi}%
				_{1,p_{2}}}{\hat{c}})^{\beta _{2}} \\ 
			\leq (\Lambda \hat{c}||\hat{\phi}_{1,p_{1}}||_{\infty })^{\alpha _{2}}+\hat{c%
			}^{-\beta _{2}}\leq \Lambda ^{\alpha _{2}}(1+(\hat{c}||\hat{\phi}%
			_{1,p_{1}}||_{\infty })^{\alpha _{2}})\text{ \ in }\Omega ,%
		\end{array}%
	\end{equation}%
	while if $\beta _{1},\alpha _{2}<0$, we obtain%
	\begin{equation}
		\begin{array}{l}
			(\Lambda \hat{y}_{1})^{\alpha _{1}}+(\Lambda ^{-1}\hat{\phi}%
			_{1,p_{2}})^{\beta _{1}}\leq (\Lambda \frac{\hat{\phi}_{1,p_{1}}}{\hat{c}}%
			)^{\alpha _{1}}+(\Lambda ^{-1}\mu )^{\beta _{1}} \\ 
			\leq \hat{c}^{-\alpha _{1}}+(\Lambda ^{-1}\mu )^{\beta _{1}}\leq \Lambda
			^{-\beta _{1}}(1+\mu ^{\beta _{1}})\text{ \ in }\Omega ,%
		\end{array}%
	\end{equation}%
	\begin{equation}
		\begin{array}{l}
			(\Lambda ^{-1}\hat{\phi}_{1,p_{1}})^{\alpha _{2}}+(\Lambda \hat{y}%
			_{2})^{\beta _{2}}\leq (\Lambda ^{-1}\mu )^{\alpha _{2}}+(\Lambda \frac{\hat{%
					\phi}_{1,p_{2}}}{\hat{c}})^{\beta _{2}} \\ 
			\leq (\Lambda ^{-1}\mu )^{\alpha _{2}}+\hat{c}^{-\beta _{2}}\leq \Lambda
			^{-\alpha _{2}}(\mu ^{\alpha _{2}}+1)\text{ \ in }\Omega ,%
		\end{array}%
	\end{equation}%
	provided $\Lambda >0$ is sufficiently large. By (\ref{6*}) one has%
	\begin{equation}
		\begin{array}{l}
			-\Delta _{p_{i}}(\Lambda \hat{y}_{i})+|\Lambda \hat{y}_{i}|^{p_{i}-2}(%
			\Lambda \hat{y}_{i})=\Lambda ^{p_{i}-1}\text{\ in }\Omega ,\text{ for }i=1,2.%
		\end{array}
		\label{12}
	\end{equation}%
	Then, in view of (\ref{c1}), gathering (\ref{11})-(\ref{12}) together imply%
	\begin{eqnarray}
		&&-\Delta _{p_{1}}(\Lambda \hat{y}_{1})+|\Lambda \hat{y}_{1}|^{p_{1}-2}(%
		\Lambda \hat{y}_{1})  \label{17} \\
		&\geq &\left\{ 
		\begin{array}{ll}
			(\Lambda \hat{y}_{1})^{\alpha _{1}}+(\Lambda \hat{y}_{2})^{\beta _{1}} & 
			\text{if }\beta _{1}>0 \\ 
			(\Lambda \hat{y}_{1})^{\alpha _{1}}+(\Lambda ^{-1}\hat{\phi}%
			_{1,p_{2}})^{\beta _{1}} & \text{if }\beta _{1}<0%
		\end{array}%
		\right. \\
		&\geq &(\Lambda \hat{y}_{1})^{\alpha _{1}}+v^{\beta _{1}}\text{ \ in }\Omega
		\nonumber
	\end{eqnarray}%
	and%
	\begin{eqnarray}
		&&-\Delta _{p_{2}}(\Lambda \hat{y}_{2})+|\Lambda \hat{y}_{2}|^{p_{2}-2}(%
		\Lambda \hat{y}_{2})  \label{17*} \\
		&\geq &\left\{ 
		\begin{array}{ll}
			(\Lambda \hat{y}_{1})^{\alpha _{2}}+(\Lambda \hat{y}_{2})^{\beta _{2}} & 
			\text{if }\alpha _{2}>0 \\ 
			(\Lambda ^{-1}\hat{\phi}_{1,p_{1}})^{\alpha _{2}}+(\Lambda \hat{y}%
			_{2})^{\beta _{2}} & \text{if }\alpha _{2}<0%
		\end{array}%
		\right. \\
		&\geq &u^{\alpha _{1}}+(\Lambda \hat{y}_{2})^{\beta _{2}}\text{ \ in }\Omega
		,  \nonumber
	\end{eqnarray}%
	for $\Lambda >1$ large enough. According to (\ref{6*}), (\ref{c1*}) is
	fulfilled for $(\overline{u},\overline{v}):=\Lambda (\hat{y}_{1},\hat{y}%
	_{2}) $ and therefore, in view of (\ref{17})-(\ref{17*}), (\ref{c3}) too.
	
	We claim that (\ref{c2}) holds for $(\underline{u},\underline{v}):=\Lambda
	^{-1}(\hat{\phi}_{1,p_{1}},\hat{\phi}_{1,p_{2}})$. From (\ref{3*}), (\ref{4*}) and (\ref{c1}), we have%
	\begin{equation}
		(\Lambda ^{-1}\hat{\phi}_{1,p_{1}})^{\alpha _{1}}+v^{\beta _{1}}\geq
		(\Lambda ^{-1}\hat{\phi}_{1,p_{1}})^{\alpha _{1}}\geq (\Lambda ^{-1}||\hat{%
			\phi}_{1,p_{1}}||_{\infty })^{\alpha _{1}}\text{\ \ in }\Omega ,  \label{26}
	\end{equation}%
	\begin{equation}
		u^{\alpha _{2}}+(\Lambda ^{-1}\hat{\phi}_{1,p_{2}})^{\beta _{2}}\geq
		(\Lambda ^{-1}\hat{\phi}_{1,p_{2}})^{\beta _{2}}\geq (\Lambda ^{-1}||\hat{\phi}_{1,p_{2}}||_{\infty })^{\beta _{2}}\text{\ \ in }\Omega ,
	\end{equation}%
	as well as
	\begin{equation}
		\begin{array}{l}
			-\Delta _{p_{i}}(\Lambda ^{-1}\hat{\phi}_{1,p_{i}})+|\Lambda ^{-1}\hat{\phi}%
			_{1,p_{i}}|^{p_{i}-2}(\Lambda ^{-1}\hat{\phi}_{1,p_{i}})=\hat{\lambda}%
			_{1,p_{i}}(\Lambda ^{-1}\hat{\phi}_{1,p_{i}})^{p_{i}-1} \\ 
			\leq \hat{\lambda}_{1,p_{i}}(\Lambda ^{-1}||\hat{\phi}_{1,p_{1}}||_{\infty
			})^{p_{i}-1}\text{\ in }\Omega ,\text{ for }i=1,2.%
		\end{array}
		\label{26*}
	\end{equation}%
	Then, gathering (\ref{26})-(\ref{26*}) together, since (\ref{c1*}) is
	fulfilled (in view of (\ref{3*})), we deduce that (\ref{c2}) is achieved for $%
	\Lambda >0$ large enough. Consequently, considering that functions $\Lambda
	^{-1}\hat{\phi}_{1,p_{1}}$ and $\Lambda ^{-1}\hat{\phi}_{1,p_{2}}$ fulfill (%
	\ref{c4}), Theorem \ref{T7} ensures the existence of a positive solution $%
	(u,v)\in int \mathcal{C}_{+}^{1,\tau }(\overline{\Omega })\times int\mathcal{C}_{+}^{1,\tau }(\overline{\Omega }%
	),$ verifying (\ref{27}). Moreover, $(u,v)$ is unique
	once $\alpha _{2},\beta _{1}\in (-1,0)$ and fulfill assumption (\ref{c10})
	in Theorem \ref{T7}. This ends the proof.
\end{proof}

By making some specific and necessary adjustments, it is quite possible to
construct a pair of sub-supersolution in the spirit of \cite{GM}. However,
it should be noted that the sub-supersolution produced in \cite{GM} does not
fulfill the required assumptions (\ref{c2})-(\ref{c3}), in particular the
condition (\ref{c1*}) which in no case could be ignored.

Let $\Lambda >0$ be a constant so large such that%
\begin{equation}
	\Lambda >\max_{i=1,2}\{2(1+\frac{3}{( 3^{p_{i}-1} - 2^{p_{i}-1})^{\frac{1}{p_{i}-1}}}+||\hat{\phi}_{1,p_{i}}||_{\infty } +\Vert
	y_{i}\Vert _{\infty } )\} .  \label{2}
\end{equation}%
Define%
\begin{equation}
	\underline{u}_{\Lambda }:=\Lambda ^{-1}(\Lambda -\hat{\phi}_{1,p_{1}}),\quad 
	\underline{v}_{\Lambda }:=\Lambda ^{-1}(\Lambda -\hat{\phi}_{1,p_{2}}),
	\label{13*}
\end{equation}%
\begin{equation}
	\overline{u}_{\Lambda }:=\Lambda (\Lambda -y_{1}),\quad \overline{v}%
	_{\Lambda }:=\Lambda (\Lambda -y_{2}).  \label{13}
\end{equation}%
Obviously, $\underline{u}_{\Lambda }\leq \overline{u}_{\Lambda }$ and $%
\underline{v}_{\Lambda }\leq \overline{v}_{\Lambda }$. Moreover, via (\ref{6}) and (\ref{3*}) we have 
\begin{equation}
	\frac{\partial \underline{u}_{\Lambda }}{\partial \eta }=-\Lambda \frac{%
		\partial \hat{\phi}_{1,p_{1}}}{\partial \eta }=0,\text{ \ }\frac{\partial 
		\underline{v}_{\Lambda }}{\partial \eta }=-\Lambda \frac{\partial \hat{\phi}%
		_{1,p_{1}}}{\partial \eta }=0\text{ \ on \ }\partial \Omega  \label{14}
\end{equation}%
and 
\begin{equation}
	\frac{\partial \overline{u}_{\Lambda }}{\partial \eta }=-\Lambda \frac{%
		\partial y_{1}}{\partial \eta }>0,\text{ \ }\ \frac{\partial \overline{v}%
		_{\Lambda }}{\partial \eta }=-\Lambda \frac{\partial y_{2}}{\partial \eta }>0%
	\text{ \ \ on \ }\partial \Omega ,  \label{14*}
\end{equation}%
showing that $(\underline{u}_{\Lambda },\underline{v}_{\Lambda })$ and $(%
\overline{u}_{\Lambda },\overline{v}_{\Lambda })$ fulfill assumption (\ref%
{c1*}).

\begin{theorem}
	\label{T3} Assume (\ref{c1}) holds. Then, for $\Lambda >0$ is big enough,
	problem $(\mathrm{P})$ admits a solution $(u,v)\in int \mathcal{C}_{+}^{1,\tau }(\overline{\Omega })\times int\mathcal{C}_{+}^{1,\tau }(\overline{\Omega }%
	),$ $\tau \in (0,1)$, within $[\underline{u}_{\Lambda },\overline{u}_{\Lambda
	}]\times \lbrack \underline{v}_{\Lambda },\overline{u}_{\Lambda }]$, which
	is unique once $\alpha _{2},\beta _{1}\in (-1,0) $ and assumption (\ref{c10}%
	) is fulfilled .
\end{theorem}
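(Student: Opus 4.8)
The plan is to mimic the proof of Theorem \ref{T1}, now for the pair (\ref{13*})--(\ref{13}): I would verify that $(\underline{u}_{\Lambda},\underline{v}_{\Lambda})$ and $(\overline{u}_{\Lambda},\overline{v}_{\Lambda})$ form a sub-supersolution pair of $(\mathrm{P})$ --- that is, the ordering, the boundary condition (\ref{c1*}) and the weak inequalities (\ref{c2})--(\ref{c3}) --- which in addition fulfills the location condition (\ref{c4}), and then invoke Theorem \ref{T7}. The ordering $\underline{u}_{\Lambda}\leq\overline{u}_{\Lambda}$, $\underline{v}_{\Lambda}\leq\overline{v}_{\Lambda}$ and (\ref{c1*}) are already recorded in (\ref{14})--(\ref{14*}). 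Since the boundary integrals appearing in (\ref{c2})--(\ref{c3}) are precisely those generated by integrating the $p_{i}$-Laplacian by parts against a nonnegative test function, these weak inequalities reduce to the pointwise differential inequalities verified below.

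For the subsolution, I would first note that the first Neumann eigenvalue in (\ref{3*}) is $\hat{\lambda}_{1,p_{i}}=1$, attained at a positive constant; hence $-\Delta_{p_{i}}\hat{\phi}_{1,p_{i}}=0$ and, by (\ref{2}), $\underline{u}_{\Lambda}=1-\Lambda^{-1}\hat{\phi}_{1,p_{1}}$ and $\underline{v}_{\Lambda}=1-\Lambda^{-1}\hat{\phi}_{1,p_{2}}$ are constants lying in $(\rho,1)$ for some fixed $\rho\in(0,1)$; in particular (\ref{c4}) holds with this $\rho$. As $-\Delta_{p_{i}}$ annihilates these constants, (\ref{c2}) collapses to the pointwise bounds
\[
\underline{u}_{\Lambda}^{p_{1}-1}\leq\underline{u}_{\Lambda}^{\alpha_{1}}+v^{\beta_{1}}\quad\text{and}\quad\underline{v}_{\Lambda}^{p_{2}-1}\leq u^{\alpha_{2}}+\underline{v}_{\Lambda}^{\beta_{2}},
\]
both immediate from $0<\underline{u}_{\Lambda},\underline{v}_{\Lambda}<1$ together with $\alpha_{1},\beta_{2}<0<p_{i}-1$ (so that $\underline{u}_{\Lambda}^{p_{1}-1}<\underline{u}_{\Lambda}^{\alpha_{1}}$ and $\underline{v}_{\Lambda}^{p_{2}-1}<\underline{v}_{\Lambda}^{\beta_{2}}$) and the nonnegativity of $v^{\beta_{1}}$ and $u^{\alpha_{2}}$ on the rectangle $[\underline{u}_{\Lambda},\overline{u}_{\Lambda}]\times[\underline{v}_{\Lambda},\overline{v}_{\Lambda}]$.

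The supersolution is the substantive step. Using (\ref{6}) and $\overline{u}_{\Lambda}=\Lambda(\Lambda-y_{1})$, a direct computation gives
\[
-\Delta_{p_{1}}\overline{u}_{\Lambda}+|\overline{u}_{\Lambda}|^{p_{1}-2}\overline{u}_{\Lambda}=\Lambda^{p_{1}-1}\bigl[(\Lambda-y_{1})^{p_{1}-1}+y_{1}^{p_{1}-1}-1\bigr],
\]
and likewise for $\overline{v}_{\Lambda}$ with $p_{2},y_{2}$. Since $0\leq y_{i}\leq\Vert y_{i}\Vert_{\infty}$ and $\Lambda>2\Vert y_{i}\Vert_{\infty}$ by (\ref{2}), the bracket is bounded below by $(\Lambda/2)^{p_{i}-1}-1$, so the left-hand side is of order $\Lambda^{2(p_{i}-1)}$. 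On the other hand, on the admissible rectangle one has $\overline{u}_{\Lambda}\geq\tfrac{1}{2}\Lambda^{2}$, whence $\overline{u}_{\Lambda}^{\alpha_{1}}\leq1$, while $v^{\beta_{1}}\leq\overline{v}_{\Lambda}^{\beta_{1}}=O(\Lambda^{2\beta_{1}})$ if $\beta_{1}>0$ and $v^{\beta_{1}}\leq\underline{v}_{\Lambda}^{\beta_{1}}=O(1)$ if $\beta_{1}<0$; the terms $u^{\alpha_{2}}$ and $\overline{v}_{\Lambda}^{\beta_{2}}$ are controlled in the same way using $\alpha_{2}<p_{2}-1$ and $\beta_{2}<0$. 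As $\beta_{1}<p_{1}-1$ and $\alpha_{2}<p_{2}-1$ by (\ref{c1}), all the exponents of $\Lambda$ occurring on the right are strictly below $2(p_{i}-1)$; hence, for $\Lambda$ as large as prescribed by (\ref{2}), the inequalities
\[
-\Delta_{p_{1}}\overline{u}_{\Lambda}+|\overline{u}_{\Lambda}|^{p_{1}-2}\overline{u}_{\Lambda}\geq\overline{u}_{\Lambda}^{\alpha_{1}}+v^{\beta_{1}},\qquad-\Delta_{p_{2}}\overline{v}_{\Lambda}+|\overline{v}_{\Lambda}|^{p_{2}-2}\overline{v}_{\Lambda}\geq u^{\alpha_{2}}+\overline{v}_{\Lambda}^{\beta_{2}}
\]
hold, i.e. (\ref{c3}) is satisfied.

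With the ordering, (\ref{c1*}), (\ref{c2})--(\ref{c3}) and (\ref{c4}) all in force, Theorem \ref{T7} applies and produces a solution $(u,v)\in int\mathcal{C}_{+}^{1,\tau}(\overline{\Omega})\times int\mathcal{C}_{+}^{1,\tau}(\overline{\Omega})$ located in $[\underline{u}_{\Lambda},\overline{u}_{\Lambda}]\times[\underline{v}_{\Lambda},\overline{v}_{\Lambda}]$, which is unique whenever $\alpha_{2},\beta_{1}\in(-1,0)$ and (\ref{c10}) holds. I expect the supersolution estimate to be the main obstacle: one must track the several cases according to the signs of $\beta_{1}$ and $\alpha_{2}$ and check, in each, that the growth in $\Lambda$ of the reaction term is strictly dominated by the $\Lambda^{2(p_{i}-1)}$ coming from the principal part --- which is exactly where the strict inequalities $\beta_{1}<p_{1}-1$, $\alpha_{2}<p_{2}-1$ of (\ref{c1}) and the explicit threshold (\ref{2}) enter.
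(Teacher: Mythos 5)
Your proposal is correct and, in its overall architecture, identical to the paper's: same pair (\ref{13*})--(\ref{13}), verification of the ordering, of (\ref{c1*}) via (\ref{14})--(\ref{14*}), of (\ref{c2})--(\ref{c3}) and of (\ref{c4}) with $\rho=\tfrac12$, followed by an appeal to Theorem \ref{T7}; your supersolution estimate reproduces the paper's computation (\ref{8})--(\ref{8*}), including the case split on the sign of $\beta _{1}$ (resp. $\alpha _{2}$) and the observation that the reaction grows at most like $\Lambda ^{2\beta _{1}}$ (resp. $\Lambda ^{2\alpha _{2}}$) against the $\Lambda ^{2(p_{i}-1)}$ produced by the principal part. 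The one place where you genuinely diverge is the subsolution step. You assert that the first eigenvalue of (\ref{3*}) is $\hat{\lambda}_{1,p_{i}}=1$, attained at positive constants, so that $\underline{u}_{\Lambda },\underline{v}_{\Lambda }$ are constants in $(\tfrac12,1)$ and (\ref{c2}) collapses to $t^{p_{i}-1}<t^{\alpha }$ for $t\in (0,1)$, $\alpha <0$; the paper instead claims $\hat{\lambda}_{1,p_{1}},\hat{\lambda}_{1,p_{2}}>1$ and keeps a nonconstant eigenfunction, discarding the term $\Lambda ^{-(p_{1}-1)}(1-\hat{\lambda}_{1,p_{1}})\hat{\phi}_{1,p_{1}}^{p_{1}-1}\leq 0$ in (\ref{37}). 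On this point you are right and the paper is not: the Rayleigh quotient $\left( \Vert \nabla w\Vert _{p_{i}}^{p_{i}}+\Vert w\Vert _{p_{i}}^{p_{i}}\right) /\Vert w\Vert _{p_{i}}^{p_{i}}$ over $W^{1,p_{i}}(\Omega )$ has infimum exactly $1$, attained precisely at the constants, so the strict inequality $\hat{\lambda}_{1,p_{i}}>1$ is false. This error is harmless for the paper, since its computation only uses $\hat{\lambda}_{1,p_{i}}\geq 1$, and with $\hat{\lambda}_{1,p_{i}}=1$ the discarded term vanishes --- which is exactly your version of the argument. In short, both proofs are sound and follow the same route, and your treatment of the subsolution is the cleaner (and strictly correct) one.
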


\begin{proof}
	Let $(u,v)\in W^{1,p_{1}}(\Omega )\times W^{1,p_{2}}(\Omega )$ within $[%
	\underline{u}_{\Lambda },\overline{u}_{\Lambda }]\times \lbrack \underline{v}%
	_{\Lambda },\overline{v}_{\Lambda }]$. From (\ref{2}) observe that
	\begin{equation}
		\Lambda -||\hat{\phi}_{1,p_{i}}||_{\infty } > \frac{\Lambda }{2} \text{ and }
		\Lambda - \Vert y_{i}\Vert _{\infty } > \frac{\Lambda }{2} ,\text{ for }%
		i=1,2.  \label{7}
	\end{equation}
	Moreover, on the assumption that 
	\begin{equation}
		\Lambda > \frac{6}{( 3^{p_{i}-1} - 2^{p_{i}-1})^{\frac{1}
				{p_{i}-1}}}
	\end{equation}
	we derive that
	\begin{equation}
		( \frac{\Lambda}{2}) ^{p_{i}-1} - ( \frac{\Lambda}{3} )^{p_{i}-1}>1
		,\text{ for } i=1,2. \label{77}
	\end{equation}
	By (\ref{7}), (\ref{13}) and (\ref{77}), a direct computations give%
	\begin{equation}
		\begin{array}{l}
			-\Delta _{p_{1}}\overline{u}_{\Lambda }+|\overline{u}_{\Lambda }|^{p_{1}-2}%
			\overline{u}_{\Lambda }=\Delta _{p_{1}}(\Lambda y_{1})+\overline{u}_{\Lambda
			}^{p_{1}-1} \\ 
			=\Lambda ^{p_{1}-1}(-1+y_{1}^{p_{1}-1}+(\Lambda -y_{1})^{p_{1}-1}) \\ 
			\geq \Lambda ^{p_{1}-1}(-1+(\Lambda -\left\Vert y_{1}\right\Vert _{\infty
			})^{p_{1}-1}) \\ 
			\geq \Lambda ^{p_{1}-1}(-1+(\frac{\Lambda}{2}) ^{%
				p_{1}-1})   \\ 
			\geq \Lambda ^{p_{1}-1}  (\frac{\Lambda}{3})^{p_{1}-1} 
			\geq  (\frac{\Lambda^{2}}{3})^{p_{1}-1}  \text{ in }\Omega.
		\end{array}
		\label{8}
	\end{equation}%
	If $\beta _{1}>0$, it follows from (\ref{7}), (\ref{13}) and (\ref{c1}) that 
	\begin{equation}
		\begin{array}{l}
			\overline{u}_{\Lambda }^{\alpha _{1}}+v^{\beta _{1}}\leq \overline{u}%
			_{\Lambda }^{\alpha _{1}}+\overline{v}_{\Lambda }^{\beta _{1}}=\Lambda
			^{\alpha _{1}}(\Lambda -y_{1})^{\alpha _{1}}+\Lambda ^{\beta _{1}}(\Lambda
			-y_{2})^{\beta _{1}} \\ 
			\leq \Lambda ^{\alpha _{1}}(\Lambda -\left\Vert y_{1}\right\Vert _{\infty
			})^{\alpha _{1}}+\Lambda ^{2\beta _{1}}\leq 1+\Lambda ^{2\beta _{1}}\text{
				in }\Omega ,%
		\end{array}%
	\end{equation}%
	while, if $\beta _{1}<0$, we get 
	\begin{equation}
		\begin{array}{l}
			\overline{u}_{\Lambda }^{\alpha _{1}}+v^{\beta _{1}}\leq \overline{u}%
			_{\Lambda }^{\alpha _{1}}+\underline{v}_{\Lambda }^{\beta _{1}}=\Lambda
			^{\alpha _{1}}(\Lambda -y_{1})^{\alpha _{1}}+\Lambda ^{-\beta _{1}}(\Lambda -%
			\hat{\phi}_{1,p_{2}})^{\beta _{1}} \\ 
			\leq \Lambda ^{\alpha _{1}}(\Lambda -\left\Vert y_{1}\right\Vert _{\infty
			})^{\alpha _{1}}+\Lambda ^{-\beta _{1}}(\Lambda -||\hat{\phi}%
			_{1,p_{2}}||_{\infty })^{\beta _{1}} \\ 
			\leq 1+2^{-\beta _{1}}\Lambda ^{-\beta _{1}}\Lambda ^{\beta
				_{1}}=1+2^{-\beta _{1}}\text{ in }\Omega .%
		\end{array}
		\label{8*}
	\end{equation}%
	Then, for $\Lambda >0$ large enough, (\ref{8})-(\ref{8*}) result in%
	\[
	-\Delta _{p_{1}}\overline{u}_{\Lambda }+|\overline{u}_{\Lambda }|^{p_{1}-2}%
	\overline{u}_{\Lambda }\geq \overline{u}_{\Lambda }^{\alpha _{1}}+v^{\beta
		_{1}}\text{ in }\Omega ,
	\]%
	for all $v\in \lbrack \underline{v}_{\Lambda },\overline{v}_{\Lambda }]$.
	Similar arguments apply to the second equation in $(\mathrm{P})$ lead to%
	\[
	-\Delta _{p_{2}}\overline{v}_{\Lambda }+|\overline{v}_{\Lambda }|^{p_{2}-2}%
	\overline{v}_{\Lambda }\geq u^{\alpha _{2}}+\overline{v}_{\Lambda }^{\beta
		_{1}}\text{ in }\Omega ,
	\]%
	for all $u\in \lbrack \underline{u}_{\Lambda },\overline{u}_{\Lambda }]$.
	Bearing in mind (\ref{14*}), it follows that $(\overline{u}_{\Lambda },%
	\overline{v}_{\Lambda })$ in (\ref{13}) verifies assumption (\ref{c3}).
	Consequently, $(\overline{u}_{\Lambda },\overline{v}_{\Lambda })$ is a
	supersolution of $(\mathrm{P})$.
	
	Next, we prove that $(\underline{u}_{\Lambda },\underline{v}_{\Lambda })$ in
	(\ref{13*}) is a subsolution of $(\mathrm{P}).$ It is important to mention
	that the eigenvalues in (\ref{3*}) verify $\hat{\lambda}_{1,p_{1}},\hat{%
		\lambda}_{1,p_{2}}>1$. Hence, using (\ref{c1}), (\ref{3*}) and (\ref{7}), we
	obtain%
	\begin{equation}
		\begin{array}{l}
			-\Delta _{p_{1}}\underline{u}_{\Lambda }+|\underline{u}_{\Lambda }|^{p_{1}-2}%
			\underline{u}_{\Lambda }=\Delta _{p_{1}}(\Lambda ^{-1}\hat{\phi}_{1,p_{1}})+%
			\underline{u}_{\Lambda }^{p_{1}-1} \\ 
			=\Lambda ^{-(p_{1}-1)}(1-\hat{\lambda}_{1,p_{1}})\hat{\phi}%
			_{1,p_{1}}^{p_{1}-1}+\Lambda ^{-(p_{1}-1)}(\Lambda -\hat{\phi}%
			_{1,p_{1}})^{p_{1}-1} \\ 
			\leq \Lambda ^{-(p_{1}-1)}(\Lambda -\hat{\phi}_{1,p_{1}})^{p_{1}-1}\leq
			\Lambda ^{-(p_{1}-1)}(\Lambda -||\hat{\phi}_{1,p_{1}}||_{\infty })^{p_{1}-1}
			\\ 
			\leq (\frac{\Lambda -||\hat{\phi}_{1,p_{1}}||_{\infty }}{\Lambda }%
			)^{p_{1}-1}\leq 1\text{ in }\Omega%
		\end{array}
		\label{37}
	\end{equation}%
	and%
	\begin{equation}
		\begin{array}{l}
			-\Delta _{p_{2}}\underline{v}_{\Lambda }+|\underline{v}_{\Lambda }|^{p_{2}-2}%
			\underline{v}_{\Lambda }=\Delta _{p_{2}}(\Lambda ^{-1}\hat{\phi}_{1,p_{2}})+%
			\underline{v}_{\Lambda }^{p_{2}-1} \\ 
			=\Lambda ^{-(p_{2}-1)}(1-\hat{\lambda}_{1,p_{2}})\hat{\phi}%
			_{1,p_{2}}^{p_{2}-1}+\Lambda ^{-(p_{2}-1)}(\Lambda -\hat{\phi}%
			_{1,p_{2}})^{p_{2}-1} \\ 
			\leq \Lambda ^{-(p_{2}-1)}(\Lambda -\hat{\phi}_{1,p_{2}})^{p_{2}-1}\leq
			\Lambda ^{-(p_{2}-1)}(\Lambda -||\hat{\phi}_{1,p_{2}}||_{\infty })^{p_{2}-1}
			\\ 
			\leq (\frac{\Lambda -||\hat{\phi}_{1,p_{2}}||_{\infty }}{\Lambda }%
			)^{p_{2}-1}\leq 1\text{ in }\Omega%
		\end{array}%
	\end{equation}%
	as well as 
	\begin{equation}
		\underline{u}_{\Lambda }^{\alpha _{1}}+v^{\beta _{1}}\geq \underline{u}%
		_{\Lambda }^{\alpha _{1}}=\Lambda ^{-\alpha _{1}}(\Lambda -\hat{\phi}%
		_{1,p_{1}})^{\alpha _{1}}\geq \Lambda ^{-\alpha _{1}}\Lambda ^{\alpha _{1}}=1%
		\text{ in }\Omega ,
	\end{equation}%
	and%
	\begin{equation}
		u_{\Lambda }^{\alpha _{2}}+\underline{v}_{\Lambda }^{\beta _{2}}\geq 
		\underline{v}_{\Lambda }^{\beta _{2}}=\Lambda ^{-\beta _{2}}(\Lambda -\hat{%
			\phi}_{1,p_{2}})^{\beta _{2}}\geq \Lambda ^{-\beta _{2}}\Lambda ^{\beta
			_{2}}=1\text{ in }\Omega ,  \label{38}
	\end{equation}%
	for all $(u,v)\in \lbrack \underline{u}_{\Lambda },\overline{u}_{\Lambda
	}]\times \lbrack \underline{v}_{\Lambda },\overline{v}_{\Lambda }]$. Then,
	in view of (\ref{14}), we deduce from (\ref{37})-(\ref{38}) that $(%
	\underline{u}_{\Lambda },\underline{v}_{\Lambda })$ in (\ref{13*}) fulfill (%
	\ref{c2}) and therefore, $(\underline{u}_{\Lambda },\underline{v}_{\Lambda
	}) $ is a subsolution of $(\mathrm{P}).$ Using (\ref{7}), we derive that%
	\[
	\begin{array}{l}
		\underline{u}_{\Lambda }\geq \Lambda ^{-1}(\Lambda -\hat{\phi}%
		_{1,p_{1}})\geq \Lambda ^{-1}(\Lambda -||\hat{\phi}_{1,p_{1}}||_{\infty
		})\geq \Lambda ^{-1}(\frac{\Lambda}{2})=\frac{1}{2}
	\end{array}%
	\]%
	and 
	\[
	\begin{array}{l}
		\underline{v}_{\Lambda }\geq \Lambda ^{-1}(\Lambda -\hat{\phi}%
		_{1,p_{2}})\geq \Lambda ^{-1}(\Lambda -||\hat{\phi}_{1,p_{2}}||_{\infty
		})\geq \Lambda ^{-1}(\frac{\Lambda}{2})=\frac{1}{2}.%
	\end{array}%
	\]%
	Consequently, (\ref{c4}) is verified and thence, Theorem \ref{T7} guarantees 	the existence of a positive solution $(u,v)\in int \mathcal{C}_{+}^{1,\tau }(\overline{\Omega })\times int\mathcal{C}_{+}^{1,\tau }(\overline{\Omega }%
	),$	verifying (\ref{27}), which is unique once $\alpha _{2},\beta _{1}\in (-1,0)$ and assumption (\ref{c10}) is fulfilled. This completes the proof.
\end{proof}

\subsection{$L^{\infty }$- bound solutions}

In this part, we focus on subsolutions with zero trace condition on the
boundary $\partial \Omega $. Combined with (\ref{c1}), this type of
subsolutions bring to the forefront the singularity at the origin, generated
by negative exponents, which, until now, has been rather discreet due to the
strict positivity of the sub-supersolutions constructed. Obviously, this
will create additional difficulties in getting sub-supersolutions,
especially when the exponents are assumed all negative. Moreover,
singularities combined with zero trace subsolutions on the boundary $%
\partial \Omega $ constitute a significant barrier to get regular solutions.
At this point, note that we were not able to find in the literature the
Neumann counterpart of the regularity result for Dirichlet singular problems
in \cite[Lemma 3.1]{H}. This question remains open.

\begin{theorem}
	\label{T5} Assume (\ref{c1}) is fulfilled with $\alpha _{2},\beta _{1}>0$.
	Then, problem $(\mathrm{P})$ admits a solution $(u,v)\in
	W_{b}^{1,p_{1}}(\Omega )\times W_{b}^{1,p_{2}}(\Omega )$ such that%
	\begin{equation}
		(u,v)\in \lbrack \Lambda ^{-1}\phi _{1,p_{1}},\Lambda \hat{y}_{1}]\times
		\lbrack \Lambda ^{-1}\phi _{1,p_{2}},\Lambda \hat{y}_{2}],  \label{42}
	\end{equation}%
	provided $\Lambda >1$ is big enough.
\end{theorem}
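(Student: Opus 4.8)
The plan is to exhibit the explicit sub-supersolution pair
\[
(\underline{u}_{\Lambda},\underline{v}_{\Lambda}):=\Lambda^{-1}(\phi_{1,p_{1}},\phi_{1,p_{2}}),\qquad (\overline{u}_{\Lambda},\overline{v}_{\Lambda}):=\Lambda(\hat{y}_{1},\hat{y}_{2}),
\]
and then to invoke Theorem \ref{T8}. The feature distinguishing this case from Theorem \ref{T3} is that the subsolution is now built from the Dirichlet eigenfunctions $\phi_{1,p_{i}}$, hence vanishes on $\partial\Omega$; this is precisely the hypothesis $(\underline{u}_{\Lambda},\underline{v}_{\Lambda})\in W_{0,b}^{1,p_{1}}(\Omega)\times W_{0,b}^{1,p_{2}}(\Omega)$ of Theorem \ref{T8}, while (\ref{c6}) will follow at once from the lower bound $\phi_{1,p_{i}}\geq c_{0}d(x)$ in (\ref{5}). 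Thus, once the pair above is shown to be a sub-supersolution pair of $(\mathrm{P})$ for $\Lambda>1$ large, Theorem \ref{T8} delivers a solution $(u,v)\in W_{b}^{1,p_{1}}(\Omega)\times W_{b}^{1,p_{2}}(\Omega)$ lying in the rectangle (\ref{42}).

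To check that $\Lambda(\hat{y}_{1},\hat{y}_{2})$ is a supersolution I would argue exactly as in the case $\alpha_{2},\beta_{1}>0$ of Theorem \ref{T1}. By (\ref{6*}) one has $-\Delta_{p_{i}}(\Lambda\hat{y}_{i})+|\Lambda\hat{y}_{i}|^{p_{i}-2}(\Lambda\hat{y}_{i})=\Lambda^{p_{i}-1}$ and $\partial(\Lambda\hat{y}_{i})/\partial\eta=0$, so the supersolution half of (\ref{c1*}) holds with equality and the boundary integral in (\ref{c3}) vanishes. Since $\hat{y}_{i}$ stays bounded away from $0$ by (\ref{4*}) and (\ref{5*}), the singular diagonal term $\overline{u}_{\Lambda}^{\alpha_{1}}=(\Lambda\hat{y}_{1})^{\alpha_{1}}$ is controlled by $C\Lambda^{\alpha_{1}}$, while the hypothesis $\beta_{1}>0$ gives $v^{\beta_{1}}\leq(\Lambda\hat{y}_{2})^{\beta_{1}}\leq\Lambda^{\beta_{1}}\|\hat{y}_{2}\|_{\infty}^{\beta_{1}}$ for $v$ in the rectangle. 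As $\alpha_{1}<0<\beta_{1}<p_{1}-1$, the quantity $\overline{u}_{\Lambda}^{\alpha_{1}}+v^{\beta_{1}}$ is dominated by $\Lambda^{p_{1}-1}$ for $\Lambda$ large, so (\ref{c3}) holds for the first equation; the second is identical, using $\alpha_{2}>0$ and $\alpha_{2}<p_{2}-1$.

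For the subsolution I would use the eigenvalue equation (\ref{3}), namely $-\Delta_{p_{1}}(\Lambda^{-1}\phi_{1,p_{1}})+|\Lambda^{-1}\phi_{1,p_{1}}|^{p_{1}-2}(\Lambda^{-1}\phi_{1,p_{1}})=\Lambda^{-(p_{1}-1)}\lambda_{1,p_{1}}\phi_{1,p_{1}}^{p_{1}-1}$, together with $\partial\phi_{1,p_{1}}/\partial\eta<0$ from (\ref{5}), which secures the subsolution half of (\ref{c1*}). Since $\beta_{1}>0$ and $v>0$, the right-hand side obeys $\underline{u}_{\Lambda}^{\alpha_{1}}+v^{\beta_{1}}\geq\underline{u}_{\Lambda}^{\alpha_{1}}=\Lambda^{-\alpha_{1}}\phi_{1,p_{1}}^{\alpha_{1}}$, so it suffices to verify $\Lambda^{-(p_{1}-1)}\lambda_{1,p_{1}}\phi_{1,p_{1}}^{p_{1}-1}\leq\Lambda^{-\alpha_{1}}\phi_{1,p_{1}}^{\alpha_{1}}$, i.e. $\lambda_{1,p_{1}}\phi_{1,p_{1}}^{\,p_{1}-1-\alpha_{1}}\leq\Lambda^{p_{1}-1-\alpha_{1}}$. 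Because $p_{1}-1-\alpha_{1}>0$ and $\phi_{1,p_{1}}$ is bounded, this holds for all $\Lambda$ large, and (\ref{c2}) follows for the first equation after the cancellation of the two boundary integrals; the second is handled the same way using $\alpha_{2}>0$, $\underline{v}_{\Lambda}^{\beta_{2}}=\Lambda^{-\beta_{2}}\phi_{1,p_{2}}^{\beta_{2}}$ and $p_{2}-1-\beta_{2}>0$. Finally, for $\Lambda$ large one has $\Lambda^{-1}\phi_{1,p_{i}}\leq\Lambda^{-1}\|\phi_{1,p_{i}}\|_{\infty}\leq\Lambda\mu/\hat{c}\leq\Lambda\hat{y}_{i}$ by (\ref{4*}) and (\ref{5*}), giving the ordering $\underline{u}_{\Lambda}\leq\overline{u}_{\Lambda}$ and $\underline{v}_{\Lambda}\leq\overline{v}_{\Lambda}$; since $\phi_{1,p_{i}}$ vanishes on $\partial\Omega$ the subsolution lies in $W_{0,b}^{1,p_{1}}(\Omega)\times W_{0,b}^{1,p_{2}}(\Omega)$, and (\ref{c6}) follows from (\ref{5}) with $c=\Lambda^{-1}c_{0}$, so Theorem \ref{T8} concludes.

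The step I expect to be delicate — and the reason for imposing $\alpha_{2},\beta_{1}>0$ — is the supersolution inequality, not the singular subsolution one. With a vanishing-trace subsolution the diagonal terms $\underline{u}_{\Lambda}^{\alpha_{1}}$ and $\underline{v}_{\Lambda}^{\beta_{2}}$ blow up near $\partial\Omega$, but this only reinforces the subsolution inequalities and, via $\phi_{1,p_{i}}\geq c_{0}d(x)$, yields exactly the bound $|f_{i}|\leq Cd(x)^{\gamma}$ with $\gamma=\min\{\alpha_{1},\beta_{2}\}\in(-1,0)$ that Theorem \ref{T2} requires internally. By contrast, the supersolution $\Lambda\hat{y}_{i}$ is comparable to the constant $\Lambda^{2}$ and cannot dominate a right-hand side that is singular at the boundary: were $\beta_{1}<0$, the cross term $v^{\beta_{1}}$ would be minimized by the vanishing subsolution and blow up like $\phi_{1,p_{2}}^{\beta_{1}}$ near $\partial\Omega$, destroying (\ref{c3}). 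The sign conditions $\alpha_{2},\beta_{1}>0$ are precisely what keep the cross terms bounded by the powers $\Lambda^{\beta_{1}},\Lambda^{\alpha_{2}}$ with exponents strictly below $p_{i}-1$, so that the supersolution can be closed.
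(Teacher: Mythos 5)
Your proposal is correct and follows essentially the same route as the paper: the same pair $\Lambda^{-1}(\phi_{1,p_{1}},\phi_{1,p_{2}})$, $\Lambda(\hat{y}_{1},\hat{y}_{2})$, the same pointwise estimates exploiting $\alpha_{1},\beta_{2}<0<\beta_{1},\alpha_{2}$ together with (\ref{4*}), (\ref{5*}) and (\ref{5}), and the same conclusion via Theorem \ref{T8} with (\ref{c6}) supplied by $\phi_{1,p_{i}}\geq c_{0}d(x)$. The only cosmetic differences are that you verify the ordering $\underline{u}_{\Lambda}\leq\overline{u}_{\Lambda}$ explicitly and phrase the subsolution inequality as a pointwise power comparison, where the paper uses sup-norm bounds.
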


\begin{proof}
	First, putting $(\underline{u},\underline{v}):=(\Lambda ^{-1}\phi
	_{1,p_{1}},\Lambda ^{-1}\phi _{1,p_{2}})$ note that (\ref{5}) implies that (%
	\ref{c1*}) is satisfied. Let $(u,v)\in W^{1,p_{1}}(\Omega )\times
	W^{1,p_{2}}(\Omega )$ within $[\Lambda ^{-1}\phi _{1,p_{1}},\Lambda \hat{y}%
	_{1}]\times \lbrack \Lambda ^{-1}\phi _{1,p_{2}},\Lambda \hat{y}_{2}]$. In
	view of (\ref{3}) and (\ref{6*}), we get%
	\begin{equation}
		\begin{array}{l}
			-\Delta _{p_{i}}\left( \Lambda ^{-1}\phi _{1,p_{i}}\right) +|\Lambda
			^{-1}\phi _{1,p_{i}}|^{p_{i}-2}\left( \Lambda ^{-1}\phi _{1,p_{i}}\right)
			=\lambda _{1,p_{i}}(\Lambda ^{-1}\phi _{1,p_{i}})^{p_{i}-1} \\ 
			\leq \lambda _{1,p_{i}}(\Lambda ^{-1}||\phi _{1,p_{1}}||_{\infty })^{p_{i}-1}%
			\text{ \ in }\Omega%
		\end{array}
		\label{41}
	\end{equation}%
	and 
	\begin{equation}
		-\Delta _{p_{i}}\left( \Lambda \hat{y}_{i}\right) +|\Lambda \hat{y}%
		_{i}|^{p_{i}-2}\left( \Lambda \hat{y}_{i}\right) =\Lambda ^{p_{i}-1}\text{\
			\ in }\Omega ,\text{ for }i=1,2.
	\end{equation}%
	By (\ref{c1}) one has%
	\begin{equation}
		(\Lambda ^{-1}\phi _{1,p_{1}})^{\alpha _{1}}+v^{\beta _{1}}\geq (\Lambda
		^{-1}\phi _{1,p_{1}})^{\alpha _{1}}\geq (\Lambda ^{-1}||\phi
		_{1,p_{1}}||_{\infty })^{\alpha _{1}}\text{ \ in }\Omega ,  \label{43}
	\end{equation}%
	\begin{equation}
		u^{\alpha _{2}}+(\Lambda ^{-1}\phi _{1,p_{2}})^{\beta _{2}}\geq (\Lambda
		^{-1}\phi _{1,p_{2}})^{\beta _{2}}\geq (\Lambda ^{-1}||\phi
		_{1,p_{2}}||_{\infty })^{\beta _{2}}\text{ \ in }\Omega ,  \label{43*}
	\end{equation}%
	while, from (\ref{4*}) and (\ref{5*}), it hold%
	\begin{equation}
		\begin{array}{l}
			(\Lambda \hat{y}_{1})^{\alpha _{1}}+v^{\beta _{1}}\leq (\Lambda \hat{y}%
			_{1})^{\alpha _{1}}+(\Lambda \hat{y}_{2})^{\beta _{1}}\leq (\Lambda \frac{%
				\hat{\phi}_{1,p_{1}}}{\hat{c}})^{\alpha _{1}}+(\Lambda \hat{c}\hat{\phi}%
			_{1,p_{2}})^{\beta _{1}} \\ 
			\leq (\Lambda \frac{\mu }{\hat{c}})^{\alpha _{1}}+(\Lambda \hat{c}||\hat{\phi%
			}_{1,p_{2}}||_{\infty })^{\beta _{1}}\leq \Lambda ^{\beta _{1}}((\frac{\mu }{%
				\hat{c}})^{\alpha _{1}}+(\hat{c}||\hat{\phi}_{1,p_{2}}||_{\infty })^{\beta
				_{1}})\text{ \ in }\Omega%
		\end{array}%
	\end{equation}%
	and%
	\begin{equation}
		\begin{array}{l}
			u^{\alpha _{2}}+(\Lambda \hat{y}_{2})^{\beta _{2}}\leq (\Lambda \hat{y}%
			_{1})^{\alpha _{2}}+(\Lambda \hat{y}_{2})^{\beta _{2}}\leq (\Lambda \hat{c}%
			\hat{\phi}_{1,p_{1}})^{\alpha _{2}}+(\Lambda \frac{\hat{\phi}_{1,p_{2}}}{%
				\hat{c}})^{\beta _{2}} \\ 
			\leq (\Lambda \hat{c}||\hat{\phi}_{1,p_{1}}||_{\infty })^{\alpha
				_{2}}+(\Lambda \frac{\mu }{\hat{c}})^{\beta _{2}}\leq \Lambda ^{\alpha
				_{2}}((\hat{c}||\hat{\phi}_{1,p_{1}}||_{\infty })^{\alpha _{2}}+(\frac{\mu }{%
				\hat{c}})^{\beta _{2}})\text{ \ in }\Omega .%
		\end{array}
		\label{41*}
	\end{equation}%
	Then, gathering (\ref{41})-(\ref{41*}) together, we conclude that (\ref{c2})
	and (\ref{c3}) are achieved for $\Lambda >0$ large enough. Hence, $\Lambda
	^{-1}(\phi _{1,p_{1}},\phi _{1,p_{2}})$ and $\Lambda (\hat{y}_{1},\hat{y}%
	_{2})$ are a sub-supersolutions of problem $(\mathrm{P})$. By (\ref{5}),
	assumption (\ref{c6}) is fulfilled. Consequently, owing to Theorem \ref{T8},
	there exists a solution $(u,v)\in W_{b}^{1,p_{1}}(\Omega )\times
	W_{b}^{1,p_{2}}(\Omega )$ of system $(\mathrm{P})$ verifying (\ref{42}).
\end{proof}

To deal with the case when all exponents are negative in (\ref{c1}), we
slightly modify the location area of potential solutions by moving the upper
limits of the rectangle formed through sub-supersolutions. To do so, assume $%
-1<\alpha _{2},\beta _{1}<0$ and let $\hat{z}_{1},\hat{z}_{2}\in
W_{b}^{1,p_{i}}(\Omega )$ be the unique solutions of Neumann Problems%
\begin{equation}
	-\Delta _{p_{1}}\hat{z}_{1}+|\hat{z}_{1}|^{p_{1}-2}\hat{z}_{1}=d(x)^{\beta
		_{1}}\text{ in }\Omega ,\text{ }\frac{\partial \hat{z}_{1}}{\partial \eta }=0%
	\text{ on }\partial \Omega ,  \label{44}
\end{equation}%
\begin{equation}
	-\Delta _{p_{2}}\hat{z}_{2}+|\hat{z}_{2}|^{p_{2}-2}\hat{z}_{2}=d(x)^{\alpha
		_{2}}\text{ in }\Omega ,\text{ \ }\frac{\partial \hat{z}_{2}}{\partial \eta }%
	=0\text{ on }\partial \Omega .  \label{44*}
\end{equation}

Note that, the Hardy-Sobolev type inequality (\ref{54}) guarantees that the
right-hand sides of (\ref{44}) and (\ref{44*}) belong to $%
W^{-1,p_{1}}(\Omega )^{*}$ and $W^{-1,p_{2}}(\Omega )^{*}$, respectively.
Consequently, Minty-Browder theorem (see, e.g., \cite[Theorem V.15]{B})
implies the existence and uniqueness of $\hat{z}_{i}\in W^{1,p_{i}}(\Omega )$
in (\ref{44}) and (\ref{44*}), $i=1,2$. Moreover, by weak comparison
principle (see \cite{ST}), it is readily seen that there is a constant $%
c_{1}>0$ such that 
\begin{equation}
	\hat{z}_{i}\geq c_{1}\hat{\phi}_{1,p_{i}}\text{ in }\Omega ,\text{ for }%
	i=1,2.  \label{45}
\end{equation}%
Next, we provide the $L^{\infty }$-bound of $\hat{z}_{i}.$

\begin{lemma}
	\label{L1}Under assumption 
	\begin{equation}
		0>\beta _{1},\alpha _{2}>\frac{-1}{N},  \label{br1}
	\end{equation}%
	solutions $\hat{z}_{1}\in W^{1,p_{1}}(\Omega )$ and $\hat{z}_{2}\in
	W^{1,p_{2}}(\Omega )$ of problems (\ref{44}) and (\ref{44*}) are bounded in $%
	L^{\infty }(\Omega )$.
\end{lemma}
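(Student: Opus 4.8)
The plan is to reduce the lemma to a standard $L^{\infty}$-bound obtained by the Stampacchia truncation method, the only genuinely new point being to verify that the singular right-hand sides carry enough integrability under the sharp restriction (\ref{br1}). First I would record the integrability of the data. Since $d(x)^{s}\in L^{q}(\Omega)$ exactly when $sq>-1$, and since (\ref{br1}) gives $|\beta_{1}|,|\alpha_{2}|<\tfrac1N$, one may pick exponents $q_{1}\in(N,1/|\beta_{1}|)$ and $q_{2}\in(N,1/|\alpha_{2}|)$ so that $d(x)^{\beta_{1}}\in L^{q_{1}}(\Omega)$ and $d(x)^{\alpha_{2}}\in L^{q_{2}}(\Omega)$ with $q_{i}>N>N/p_{i}$. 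It is precisely this strict inequality $q_{i}>N/p_{i}$ that will drive the iteration.

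Because the $\hat z_{i}$ are positive by (\ref{45}), it suffices to bound them from above, and I would treat both cases uniformly. Dropping indices, let $\hat z>0$ solve $-\Delta_{p}\hat z+\hat z^{p-1}=f$ in $\Omega$, $\partial\hat z/\partial\eta=0$, with $f\in L^{q}(\Omega)$, $q>N/p$. For $k\geq0$ set $A(k):=\{\hat z>k\}$ and test the weak formulation with $(\hat z-k)^{+}\in W^{1,p}(\Omega)$, which is admissible in the Neumann setting and produces no boundary term. Using $\hat z^{p-1}(\hat z-k)\geq\big((\hat z-k)^{+}\big)^{p}$ on $A(k)$, the zeroth-order term supplies the coercivity that a Poincar\'e inequality would otherwise provide, so that
\[
\|(\hat z-k)^{+}\|_{W^{1,p}(\Omega)}^{p}\leq\int_{A(k)}f\,(\hat z-k)^{+}\,\mathrm{d}x .
\]
The Sobolev embedding $W^{1,p}(\Omega)\hookrightarrow L^{p^{*}}(\Omega)$ together with H\"older's inequality with exponents $q,p^{*}$ and $r$, where $\tfrac1r=1-\tfrac1q-\tfrac1{p^{*}}>0$, then yields
\[
\|(\hat z-k)^{+}\|_{L^{p^{*}}}^{\,p-1}\leq C\,\|f\|_{L^{q}}\,|A(k)|^{1/r}.
\]

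Finally I would convert this into a decay estimate for $\phi(k):=|A(k)|$. For $h>k$, the inclusion $A(h)\subset A(k)$ and the bound $(\hat z-k)^{+}\geq h-k$ on $A(h)$ give
\[
\phi(h)\leq\frac{C\,\|f\|_{L^{q}}^{p^{*}/(p-1)}}{(h-k)^{p^{*}}}\,\phi(k)^{\delta},\qquad \delta=\frac{p^{*}}{r(p-1)} .
\]
A short computation shows $\delta>1\Longleftrightarrow q>N/p$, which holds by the first step, so Stampacchia's lemma forces $\phi(k)=0$ for every $k$ beyond some finite $k_{0}$; hence $\hat z\leq k_{0}$. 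Applying this to $\hat z_{1}$ (with $f=d^{\beta_{1}}$, $p=p_{1}$, $q=q_{1}$) and to $\hat z_{2}$ (with $f=d^{\alpha_{2}}$, $p=p_{2}$, $q=q_{2}$) delivers the claimed $L^{\infty}$-bounds. The hard part will be controlling the singular, unbounded data; it is defused entirely by the integrability secured in the first step, whereas the Neumann framework is handled painlessly since the lower-order term $|\hat z|^{p-2}\hat z$ restores full coercivity on $W^{1,p}(\Omega)$.
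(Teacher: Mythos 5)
Your proof is correct, and it shares the paper's basic philosophy of a level-set truncation: both arguments test the weak formulation with $(\hat z_i-k)^{+}$, both use the zero-order term $\hat z_i^{\,p_i-1}$ (legitimate since $\hat z_i>0$ by (\ref{45}), so an upper bound suffices) to obtain coercivity in the full $W^{1,p_i}(\Omega)$-norm without any Poincar\'e inequality, and both convert the integrability of $d^{\beta_1}$, $d^{\alpha_2}$ supplied by (\ref{br1}) into a decay of the level sets $A_k$. The execution, however, is genuinely different. The paper (following \cite[Lemma 2]{AM}) works at the endpoint: it uses only $d^{\beta_1}\in L^{N}(\Omega)$, pairs it by H\"older with the embedding $W^{1,1}(\Omega)\hookrightarrow L^{\frac{N}{N-1}}(\Omega)$, absorbs the right-hand side for large $k$ by exploiting $\Vert d^{\beta_1}\Vert_{L^{N}(A_k)}\to 0$ (absolute continuity of the integral), and then concludes from the resulting decay $\int_{A_k}(\hat z_1-k)\,\mathrm{d}x\le C_{3}|A_k|^{1+\frac{1}{N}}$ via the Ladyzhenskaya--Ural'tseva lemma \cite[Lemma 5.1, Chapter 2]{LU}. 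You instead choose $q$ strictly inside $(N,1/|\beta_1|)$ --- an interval that is nonempty precisely because of (\ref{br1}) --- use the embedding $W^{1,p}(\Omega)\hookrightarrow L^{p^{*}}(\Omega)$, and close with the classical Stampacchia recursion for $\phi(k)=|A(k)|$; your computation that $\delta>1$ is equivalent to $q>N/p$ is right, and your argument needs no absorption step. As for what each approach buys: the paper's route requires only the crude $W^{1,1}$ embedding and the endpoint exponent $N$, while yours is sharper in the hypotheses --- since all the iteration really needs is some $q>N/p_i$ with $d^{\beta_1}\in L^{q}(\Omega)$ (your insistence on $q>N$ is stronger than necessary), your method in fact proves the lemma under the weaker restriction $\max\{-1,-p_1/N\}<\beta_1<0$ and $\max\{-1,-p_2/N\}<\alpha_2<0$, which strictly improves (\ref{br1}) whenever $p_i>1$.
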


\begin{proof}
	We only show the $L^{\infty }$-bound of $\hat{z}_{1}$ in (\ref{44}) because
	that of $\hat{z}_{2}$ in (\ref{44*}) can be justified similarly. Inspired by 
	\cite[Lemma 2]{AM}, for each $k\in \mathbb{N}$, set 
	\[
	A_{k}=\left\{ x\in \Omega :\hat{z}_{1}(x)>k\right\} .
	\]%
	It is readily seen that $|A_{k}|\rightarrow 0,$ as well as, 
	\begin{equation}
		\Vert d(x)^{\beta _{1}}\Vert _{L^{N}(A_{k})}\rightarrow 0\text{ \ as }%
		k\rightarrow \infty ,  \label{br3}
	\end{equation}%
	because $\hat{z}_{1}\in L^{1}(\Omega )$ and $d(x)^{\beta _{1}}\in
	L^{N}(\Omega )$ (due to (\ref{br1})).
	
	Test with $(\hat{z}_{1}-k)^{+}$ in (\ref{44}), it results in%
	\[
	\begin{array}{l}
		\int_{\Omega } |\nabla \hat{z}_{1} |^{p_{1}-2} \nabla \hat{z}_{1} \nabla (%
		\hat{z}_{1}-k)^{+} \mathrm{d}x+\int_{\Omega }\hat{z}_{1}^{p_{1}-1}(\hat{z}%
		_{1}-k)^{+}\,\mathrm{d}x=\int_{\Omega }d(x)^{\beta _{1}}(\hat{z}_{1}-k)^{+}%
		\mathrm{d}x.%
	\end{array}%
	\]%
	Thus%
	\begin{equation}
		\begin{array}{l}
			\int_{\Omega } |\nabla \hat{z}_{1} |^{p_{1}-2} \nabla \hat{z}_{1} \nabla (%
			\hat{z}_{1}-k)^{+} \mathrm{d}x+\int_{\Omega }\hat{z}_{1}^{p_{1}-1}(\hat{z}%
			_{1}-k)^{+}\,\mathrm{d}x \\ 
			\geq \int_{\Omega } |\nabla \hat{z}_{1} |^{p_{1}-2} \nabla \hat{z}_{1}
			\nabla (\hat{z}_{1}-k)^{+} \mathrm{d}x+\int_{\Omega }((\hat{z}%
			_{1}-k)^{+})^{p_{1}-1}(\hat{z}_{1}-k)^{+}\,\mathrm{d}x \\ 
			\geq \int_{A_{k}}|\nabla \hat{z}_{1}|^{p_{1}}\mathrm{d}x+\int_{A_{k}}(\hat{z}%
			_{1}-k)^{p_{1}}\,\mathrm{d}x%
		\end{array}
		\label{70}
	\end{equation}%
	while, by H\"{o}lder inequality together with Sobolev embedding $%
	W^{1,1}(\Omega )\hookrightarrow L^{\frac{N}{N-1}}(\Omega )$, one has%
	\begin{equation}  \label{71}
		\begin{array}{l}
			\int_{\Omega }d(x)^{\beta _{1}}(\hat{z}_{1}-k)^{+}\mathrm{d}%
			x=\int_{A_{k}}d(x)^{\beta _{1}}(\hat{z}_{1}-k)^{+}\mathrm{d}x \\ 
			\leq \Vert d^{\beta _{1}}\Vert _{L^{N}(A_{k})}\Vert (\hat{z}_{1}-k)^{+}\Vert
			_{L^{\frac{N}{N-1}}(A_{k})}\leq \Vert d^{\beta _{1}}\Vert
			_{L^{N}(A_{k})}\Vert (\hat{z}_{1}-k)^{+}\Vert _{L^{\frac{N}{N-1}}(\Omega )}
			\\ 
			\leq c_{p_{1}}\Vert d^{\beta _{1}}\Vert _{L^{N}(A_{k})}\Vert (\hat{z}%
			_{1}-k)^{+}\Vert _{W^{1,1}(\Omega )}=c_{p_{1}}\Vert d^{\beta _{1}}\Vert
			_{L^{N}(A_{k})}\Vert \hat{z}_{1}-k\Vert _{W^{1,1}(A_{k})}.%
		\end{array}%
	\end{equation}%
	Therefore, from (\ref{70})-(\ref{71}), it turn out that 
	\begin{equation}
		\int_{A_{k}}|\nabla \hat{z}_{1}|^{p_{1}}\mathrm{d}x+\int_{A_{k}}(\hat{z}%
		_{1}-k)^{p_{1}}\,\mathrm{d}x\leq c_{p_{1}}\Vert d^{\beta _{1}}\Vert
		_{L^{N}(A_{k})}\int_{A_{k}}(|\nabla \hat{z}_{1}|+(\hat{z}_{1}-k))\mathrm{d}x.
		\label{72}
	\end{equation}%
	Young's inequality yields%
	\begin{equation}
		\int_{A_{k}}(|\nabla \hat{z}_{1}|+(\hat{z}_{1}-k))\mathrm{d}x\leq
		\int_{A_{k}}|\nabla \hat{z}_{1}|^{p_{1}}\mathrm{d}x+\int_{A_{k}}(\hat{z}%
		_{1}-k)^{p_{1}}\mathrm{d}x+2|A_{k}|,  \label{br4}
	\end{equation}%
	which, combined with (\ref{72}), immediately leads to 
	\[
	\begin{array}{l}
		\int_{A_{k}}|\nabla \hat{z}_{1}|^{p_{1}}\mathrm{d}x+\int_{A_{k}}(\hat{z}%
		_{1}-k)^{p_{1}}\,\mathrm{d}x \\ 
		\leq c_{p_{1}}\Vert d^{\beta _{1}}\Vert _{L^{N}(A_{k})}\left(
		\int_{A_{k}}|\nabla \hat{z}_{1}|^{p_{1}}\mathrm{d}x+\int_{A_{k}}(\hat{z}%
		_{1}-k)^{p_{1}}\mathrm{d}x+2|A_{k}|\right) .%
	\end{array}%
	\]%
	Thereby, for $k$ large enough, the limit (\ref{br3}) implies 
	\[
	\begin{array}{l}
		\int_{A_{k}}|\nabla \hat{z}_{1}|^{p_{1}}\mathrm{d}x+\int_{A_{k}}(\hat{z}%
		_{1}-k)^{p_{1}}\,\mathrm{d}x\leq C_{1}|A_{k}|.%
	\end{array}%
	\]%
	Replacing the last inequality in (\ref{br4}), we achieve 
	\[
	\begin{array}{l}
		\int_{A_{k}}(|\nabla \hat{z}_{1}|+(\hat{z}_{1}-k))\mathrm{d}x\leq
		C_{2}|A_{k}|.%
	\end{array}%
	\]%
	Now, once again applying H\"{o}lder inequality and the above Sobolev
	embedding, it follows that 
	\[
	\begin{array}{l}
		\int_{A_{k}}(\hat{z}_{1}-k)\mathrm{d}x\leq |A_{k}|^{\frac{1}{N}}\Vert (\hat{z%
		}_{1}-k)\Vert _{L^{\frac{N}{N-1}}(A_{k})}\leq C|A_{k}|^{\frac{1}{N}%
		}\int_{A_{k}}(|\nabla \hat{z}_{1}|+(\hat{z}_{1}-k))\mathrm{d}x%
	\end{array}%
	\]%
	and so, 
	\[
	\int_{A_{k}}(\hat{z}_{1}-k)\mathrm{d}x\leq C_{3}|A_{k}|^{1+\frac{1}{N}}.
	\]%
	Then, owing to \cite[Lemma 5.1 , Chaper 2]{LU}, we conclude that there is $%
	\mathrm{K}>0$, independent of $\hat{z}_{1}$, such that 
	\[
	\begin{array}{l}
		\hat{z}_{1}(x)\leq \mathrm{K}\text{ a.e. in }\Omega ,%
	\end{array}%
	\]%
	showing that $\hat{z}_{1}\in L^{\infty }(\Omega )$. Similar argument
	produces that $\hat{z}_{2}\in L^{\infty }(\Omega )$ in (\ref{44*}). This
	ends the proof.
\end{proof}

\begin{theorem}
	\label{T9} Assume (\ref{c1}) is fulfilled with 
	\begin{equation}
		\max \{-1,-(p_{2}-1)\}<\alpha _{2}<0\text{ \ and \ }\max
		\{-1,-(p_{1}-1)\}<\beta _{1}<0.  \label{c0}
	\end{equation}%
	Then, problem $(\mathrm{P})$ admits a solution $(u,v)\in W^{1,p_{1}}(\Omega
	)\times W^{1,p_{2}}(\Omega )$ verifying%
	\begin{equation}
		(u,v)\in \lbrack \Lambda ^{-1}\phi _{1,p_{1}},\Lambda \hat{z}_{1}]\times
		\lbrack \Lambda ^{-1}\phi _{1,p_{2}},\Lambda \hat{z}_{2}],  \label{42*}
	\end{equation}%
	provided $\Lambda >0$ is big enough. Moreover, if (\ref{br1}) is fullfield,
	then $(u,v)\in W_{b}^{1,p_{1}}(\Omega )\times W_{b}^{1,p_{2}}(\Omega )$.
\end{theorem}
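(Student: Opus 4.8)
The plan is to reproduce the sub-supersolution scheme of Theorem \ref{T5}, but with the upper barrier replaced by the torsion functions $\hat{z}_{1},\hat{z}_{2}$ of \eqref{44}--\eqref{44*}, which are tailored to the singular growth of the right-hand sides when $\beta _{1},\alpha _{2}<0$. Concretely, I would show that for $\Lambda >0$ large the pair
\[
(\underline{u}_{\Lambda },\underline{v}_{\Lambda }):=\Lambda ^{-1}(\phi _{1,p_{1}},\phi _{1,p_{2}}),\qquad (\overline{u}_{\Lambda },\overline{v}_{\Lambda }):=\Lambda (\hat{z}_{1},\hat{z}_{2})
\]
is a sub-supersolution pair satisfying \eqref{c1*}, \eqref{c2} and \eqref{c3}, and then apply the existence machinery underlying Theorems \ref{T2} and \ref{T8}. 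The boundary relations are immediate: by \eqref{5}, $\partial _{\eta }\underline{u}_{\Lambda }=\Lambda ^{-1}\partial _{\eta }\phi _{1,p_{1}}<0$, while \eqref{44}--\eqref{44*} give $\partial _{\eta }\overline{u}_{\Lambda }=\partial _{\eta }\overline{v}_{\Lambda }=0$, so \eqref{c1*} holds; and since $\phi _{1,p_{i}}\leq \Vert \phi _{1,p_{i}}\Vert _{\infty }$ while $\hat{z}_{i}\geq c_{1}\mu $ by \eqref{45} and \eqref{5*}, the ordering $\underline{u}_{\Lambda }\leq \overline{u}_{\Lambda }$, $\underline{v}_{\Lambda }\leq \overline{v}_{\Lambda }$ also holds for $\Lambda$ large.

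For the subsolution I would use the eigenvalue equation \eqref{3} to get $-\Delta _{p_{i}}(\Lambda ^{-1}\phi _{1,p_{i}})+|\Lambda ^{-1}\phi _{1,p_{i}}|^{p_{i}-2}(\Lambda ^{-1}\phi _{1,p_{i}})=\lambda _{1,p_{i}}(\Lambda ^{-1}\phi _{1,p_{i}})^{p_{i}-1}\leq \lambda _{1,p_{i}}(\Lambda ^{-1}\Vert \phi _{1,p_{i}}\Vert _{\infty })^{p_{i}-1}$, and then discard the nonnegative cross terms on the right-hand sides of $(\mathrm{P})$, keeping $\underline{u}_{\Lambda }^{\alpha _{1}}\geq (\Lambda ^{-1}\Vert \phi _{1,p_{1}}\Vert _{\infty })^{\alpha _{1}}$ in the first equation and $\underline{v}_{\Lambda }^{\beta _{2}}\geq (\Lambda ^{-1}\Vert \phi _{1,p_{2}}\Vert _{\infty })^{\beta _{2}}$ in the second. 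As $\alpha _{1}<0<p_{1}-1$ (and $\beta _{2}<0<p_{2}-1$), the exponent $\alpha _{1}-(p_{1}-1)$ is negative, so $\Lambda ^{\alpha _{1}-(p_{1}-1)}\to 0$ and \eqref{c2} follows for $\Lambda$ large, exactly as in \eqref{41}--\eqref{43*}.

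The delicate step is \eqref{c3}, and I expect it to be the main difficulty of the argument. From \eqref{44} I get $-\Delta _{p_{1}}(\Lambda \hat{z}_{1})+|\Lambda \hat{z}_{1}|^{p_{1}-2}(\Lambda \hat{z}_{1})=\Lambda ^{p_{1}-1}d(x)^{\beta _{1}}$, and over the rectangle the first right-hand side of $(\mathrm{P})$ is bounded by $\overline{u}_{\Lambda }^{\alpha _{1}}+v^{\beta _{1}}\leq (\Lambda c_{1}\mu )^{\alpha _{1}}+(\Lambda ^{-1}c_{0}d(x))^{\beta _{1}}=\Lambda ^{\alpha _{1}}(c_{1}\mu )^{\alpha _{1}}+\Lambda ^{-\beta _{1}}c_{0}^{\beta _{1}}d(x)^{\beta _{1}}$, using $\hat{z}_{1}\geq c_{1}\mu $, $\phi _{1,p_{2}}\geq c_{0}d(x)$ and $\beta _{1}<0$. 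Since $d(x)^{\beta _{1}}$ is bounded below by a positive constant on $\overline{\Omega }$, the required inequality reduces to $\Lambda ^{p_{1}-1}\gtrsim \Lambda ^{-\beta _{1}}$ together with a term $\Lambda ^{\alpha _{1}}(c_{1}\mu )^{\alpha _{1}}\to 0$; this holds for $\Lambda$ large \emph{precisely because} $\beta _{1}>-(p_{1}-1)$, i.e. $p_{1}-1+\beta _{1}>0$, which is exactly the content of \eqref{c0}. The identical computation with \eqref{44*}, the bound $u^{\alpha _{2}}\leq (\Lambda ^{-1}c_{0}d(x))^{\alpha _{2}}$ and $\alpha _{2}>-(p_{2}-1)$ settles the second equation. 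Matching the singular growth $d(x)^{\beta _{1}}$ of the nonlinearity against the source $\Lambda ^{p_{1}-1}d(x)^{\beta _{1}}$ produced by $\hat{z}_{1}$ is the heart of the proof, and is the reason the torsion functions \eqref{44}--\eqref{44*} replace the constant-source $\hat{y}_{i}$ of Theorem \ref{T5}.

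It remains to run the existence machinery. All four exponents being negative, each $f_{i}$ is nonincreasing in both arguments, so on the rectangle $f_{1}\leq \underline{u}_{\Lambda }^{\alpha _{1}}+\underline{v}_{\Lambda }^{\beta _{1}}\leq Cd(x)^{\min \{\alpha _{1},\beta _{1}\}}$ and $f_{2}\leq Cd(x)^{\min \{\alpha _{2},\beta _{2}\}}$ with both exponents in $(-1,0)$; hence $(\mathrm{H}_{2})$ holds and, through \eqref{54}, the Schauder argument in the proof of Theorem \ref{T2} yields a solution $(u,v)\in W^{1,p_{1}}(\Omega )\times W^{1,p_{2}}(\Omega )$ inside the rectangle and satisfying the Neumann condition. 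Note that this step uses only $(\mathrm{H}_{2})$ and the lower edge of the rectangle, so it does not require the supersolution to be bounded — which is why \eqref{42*} is stated only in $W^{1,p_{i}}$. Finally, under the stronger restriction \eqref{br1}, Lemma \ref{L1} gives $\hat{z}_{i}\in L^{\infty }(\Omega )$, so the whole pair lies in $W_{b}^{1,p_{i}}(\Omega )$; since moreover $(\underline{u}_{\Lambda },\underline{v}_{\Lambda })\in W_{0,b}^{1,p_{i}}(\Omega )$ with $\underline{u}_{\Lambda },\underline{v}_{\Lambda }\geq \Lambda ^{-1}c_{0}d(x)$ by \eqref{5}, Theorem \ref{T8} applies and upgrades the solution to $W_{b}^{1,p_{1}}(\Omega )\times W_{b}^{1,p_{2}}(\Omega )$, establishing the final claim.
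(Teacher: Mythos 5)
Your proposal is correct and takes essentially the same approach as the paper: the same barrier pair $\Lambda^{-1}(\phi_{1,p_{1}},\phi_{1,p_{2}})$ and $\Lambda(\hat{z}_{1},\hat{z}_{2})$, the same pointwise matching of the sources $\Lambda^{p_{1}-1}d(x)^{\beta_{1}}$ and $\Lambda^{p_{2}-1}d(x)^{\alpha_{2}}$ against the singular right-hand sides with \eqref{c0} fixing the exponents of $\Lambda$, and Lemma \ref{L1} for the $L^{\infty}$ upgrade under \eqref{br1}. The only difference is one of bookkeeping, in your favor: the paper simply cites Theorem \ref{T8} for the existence step even though its hypothesis $(\mathrm{H}_{1})$ formally requires the supersolution to lie in $W_{b}^{1,p_{i}}(\Omega)$, which is not known for $\Lambda\hat{z}_{i}$ without \eqref{br1}, whereas you note explicitly that the Schauder machinery of Theorem \ref{T2} never uses boundedness of the supersolution --- which is precisely why the conclusion absent \eqref{br1} is only $W^{1,p_{1}}(\Omega)\times W^{1,p_{2}}(\Omega)$.
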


\begin{proof}
	It is readily seen that inequalities in (\ref{41}), (\ref{43}) and (\ref{43*}%
	) hold true even for $-1<\alpha _{2},\beta _{1}<0$. Then, $\Lambda
	^{-1}(\phi _{1,p_{1}},\phi _{1,p_{2}})$ is a subsolution of system $(\mathrm{%
		P})$ satisfying (\ref{c6}). Let us show that $\Lambda (\hat{z}_{1},\hat{z}%
	_{2})$ is a supersolution of $(\mathrm{P})$. By (\ref{45}), (\ref{5*}) and (%
	\ref{5}), we get%
	\begin{equation}
		\begin{array}{l}
			(\Lambda \hat{z}_{1})^{\alpha _{1}}+v^{\beta _{1}}\leq (\Lambda \hat{z}%
			_{1})^{\alpha _{1}}+(\Lambda ^{-1}\phi _{1,p_{2}})^{\beta _{1}} \\ 
			\leq (\Lambda c_{1}\hat{\phi}_{1,p_{1}})^{\alpha _{1}}+(\Lambda ^{-1}\phi
			_{1,p_{2}})^{\beta _{1}} \\ 
			\leq 1+(\Lambda ^{-1}c_{0}d(x))^{\beta _{1}} \\ 
			\leq ((\Lambda ^{-1}d(x))^{-\beta _{1}}+c_{0}^{\beta _{1}})(\Lambda
			^{-1}d(x))^{\beta _{1}} \\ 
			\leq (1+c_{0}^{\beta _{1}})(\Lambda ^{-1}d(x))^{\beta _{1}}\text{\ in }%
			\Omega ,%
		\end{array}
		\label{47}
	\end{equation}%
	and similarly%
	\begin{equation}
		\begin{array}{l}
			u^{\alpha _{2}}+(\Lambda \hat{z}_{2})^{\beta _{2}}\leq (\Lambda ^{-1}\phi
			_{1,p_{1}})^{\alpha _{2}}+(\Lambda \hat{z}_{2})^{\beta _{2}} \\ 
			\leq (\Lambda ^{-1}\phi _{1,p_{1}})^{\alpha _{2}}+(\Lambda c_{1}\hat{\phi}%
			_{1,p_{2}})^{\beta _{2}} \\ 
			\leq (\Lambda ^{-1}c_{0}d(x))^{\alpha _{2}}+1 \\ 
			\leq (c_{0}^{\alpha _{2}}+(\Lambda ^{-1}d(x))^{-\alpha _{2}})(\Lambda
			^{-1}d(x))^{\alpha _{2}} \\ 
			\leq (c_{0}^{\alpha _{2}}+1)(\Lambda ^{-1}d(x))^{\alpha _{2}}\text{\ in }%
			\Omega ,%
		\end{array}%
	\end{equation}%
	for all $(u,v)\in \lbrack \Lambda ^{-1}\phi _{1,p_{1}},\Lambda \hat{z}%
	_{1}]\times \lbrack \Lambda ^{-1}\phi _{1,p_{2}},\Lambda \hat{z}_{2}]$,
	provided that $\Lambda >0$ is large. On the other hand, (\ref{44}) and (\ref%
	{44*}) imply%
	\begin{equation}
		-\Delta _{p_{1}}(\Lambda \hat{z}_{1})+|\Lambda \hat{z}_{1}|^{p_{1}-2}(%
		\Lambda \hat{z}_{1})=\Lambda ^{p_{1}-1}d(x)^{\beta _{1}}\text{ in }\Omega
	\end{equation}%
	and%
	\begin{equation}
		-\Delta _{p_{2}}(\Lambda \hat{z}_{2})+|\Lambda \hat{z}_{2}|^{p_{2}-2}(%
		\Lambda \hat{z}_{2})=\Lambda ^{p_{2}-1}d(x)^{\alpha _{2}}\text{ in }\Omega .
		\label{47*}
	\end{equation}%
	Then, it turns out from (\ref{47})-(\ref{47*}) and (\ref{c0}) that (\ref{c3}%
	) is fulfilled for $\Lambda >0$ large enough. Hence, $\Lambda (\hat{z}_{1},%
	\hat{z}_{2})$ is\ a supersolution of problem $(\mathrm{P})$. Consequently,
	owing to Theorem \ref{T8}, there exists a solution $(u,v)\in
	W^{1,p_{1}}(\Omega )\times W^{1,p_{2}}(\Omega )$ of system $(\mathrm{P})$
	verifying (\ref{42*}). Moreover, according to Lemma \ref{L1} and (\ref{42*}%
	), we infer that $(u,v)\in W_{b}^{1,p_{1}}(\Omega )\times
	W_{b}^{1,p_{2}}(\Omega )$ once (\ref{br1}) is fulfilled. This ends the proof.
\end{proof}

\end{document}